\def\sD{{\mathfrak D}}
      \def\dC{{\mathbb C}}
   \def\dN{{\mathbb N}}   
      \def\dR{{\mathbb R}}
   \def\dZ{{\mathbb Z}}
\DeclareMathOperator\dom{dom}
\DeclareMathOperator\ran{ran}
\DeclareMathOperator\Imag{Im}
\renewcommand\Im{\Imag}
\def\bm\chi{\mbox{\boldmath$\chi$}}
\def\sgn{{\rm sgn\,}}
\def\sgn{{\text{\rm sgn\,}}}
\def\loc{{\text{\rm loc\,}}}
\def\min{{\text{\rm min\,}}}
\def\max{{\text{\rm max\,}}}
\def\ess{{\text{\rm ess}}}
\def\senki{{\lbrack\negthinspace [\bot ]\negthinspace\rbrack}}
\def\senki+{{\lbrack\negthinspace [+] \negthinspace\rbrack}}
\newcounter{counter_a}
\newtheorem{theorem}{Theorem}[section]
\newtheorem{proposition}[theorem]{Proposition}
\newtheorem{corollary}[theorem]{Corollary}
\newtheorem{lemma}[theorem]{Lemma}
\theoremstyle{definition}
\numberwithin{equation}{section}
\begin{document}

\title[Singular left-definite Sturm-Liouville operators]
{Eigenvalue estimates for singular left-definite Sturm-Liouville operators}

\author{Jussi Behrndt}
\address{Institut f\"{u}r Numerische Mathematik,
Technische Universit\"{a}t Graz,
Steyrergasse 30,
A-8010 Graz, Austria}
\email{behrndt@tugraz.at}

\author{Roland M\"{o}ws}
\address{Institut f\"{u}r Mathematik, Fakult\"{a}t f\"{u}r Mathematik und Naturwissenschaften,
Technische Universit\"{a}t Ilmenau, Postfach 10 05 65, D-98684 Ilmenau, Germany}
\email{roland.moews@tu-ilmenau.de}

\author{Carsten Trunk}
\address{Institut f\"{u}r Mathematik, Fakult\"{a}t f\"{u}r Mathematik und Naturwissenschaften,
Technische Universit\"{a}t Ilmenau, Postfach 10 05 65, D-98684 Ilmenau, Germany}
\email{carsten.trunk@tu-ilmenau.de}

\subjclass[2000]{Primary 34B24, 34L15; Secondary 47A10, 47E05}
\keywords{Sturm-Liouville operator, left-definite, eigenvalue estimate, spectral gap, essential spectrum, Titchmarsh-Weyl function}

\begin{abstract}
The spectral properties of a singular left-definite Sturm-Liouville operator $JA$ are investigated and
described via the properties of the corresponding right-definite selfadjoint counterpart $A$ which is obtained by substituting the
indefinite weight function by its absolute value. The spectrum of the $J$-selfadjoint operator $JA$ is real and it follows that an interval $(a,b)\subset\dR^+$ is
a gap in the essential spectrum of  $A$
if and only if both intervals $(-b,-a)$ and $(a,b)$ are gaps in the essential spectrum of the
$J$-selfadjoint operator $JA$.  As one of the main results it is shown that the number of eigenvalues of $JA$ in
$(-b,-a) \cup (a,b)$  differs at most by three
of the number of eigenvalues of $A$ in the gap $(a,b)$; as a byproduct results on the accumulation
of eigenvalues of singular left-definite
Sturm-Liouville operators are obtained. Furthermore,
left-definite problems with symmetric and periodic coefficients are treated, and several examples are included to
illustrate the general results.
\end{abstract}

\maketitle

\section{Introduction}
We investigate spectral properties of a
Sturm-Liouville differential operator
associated with the differential expression
\begin{equation}\label{tau2222}
\tau =\frac{1}{r}\left(-\frac{d}{dx}\,p\,\frac{d}{dx}+q\right),\qquad r,p^{-1},q\in L^1_\loc(\dR)\,\,\,\text{real},
\,\,\, p>0 \,\,\,\mbox{a.e.}
\end{equation}
In contrast to standard Sturm-Liouville theory we do not assume that the weight function $r$ is positive.
Instead we consider {\it indefinite} Sturm-Liouville operators and differential expressions; here it will
be assumed that there exists some $c\in\dR$ such that the weight function $r$ is positive on $(c,\infty)$
and negative on $(-\infty,c)$.
Suppose that the corresponding {\it definite} differential expression
\begin{equation}\label{ell2222}
\ell =\frac{1}{\vert r\vert}\left(-\frac{d}{dx}\,p\,\frac{d}{dx}+q\right)
\end{equation}
is in the limit point case at both singular endpoints $-\infty$ and $\infty$, or, equivalently, that the maximal
differential operator $A$ associated with $\ell$ in the weighted Hilbert space $L^2_{\vert r\vert}(\dR)$
is selfadjoint. If $J$ denotes the multiplication by $\sgn r$, then formally the indefinite and definite differential expressions $\tau$ and $\ell$ are related via $\tau=J\ell$, and hence $JA$ is the maximal operator associated with
$\tau$ in $L^2_{\vert r\vert}(\dR)$.
Observe that the indefinite Sturm-Liouville
operator $JA$ is neither symmetric nor selfadjoint in the Hilbert space $L^2_{\vert r\vert}(\dR)$ but $JA$ is
selfadjoint with respect to an indefinite inner product (which has $J$
as its Gramian); we shall say that $JA$ is a {\it $J$-selfadjoint} operator
in $L^2_{\vert r\vert}(\dR)$.

A modern topic in Sturm-Liouville theory is the study of qualitative and quantitative spectral properties
of indefinite Sturm-Liouville differential operators.
One of the standard approaches is to describe the spectrum $\sigma(JA)$
of the indefinite operator $JA$
via the selfadjoint
operator $A$ and its spectral properties. In the left-definite case,
i.e.,\  $\min\sigma(A)>0$, it follows
that the spectrum of $JA$ is real with a gap around $0$ and accumulates to $+\infty$ and $-\infty$,
see, e.g., \cite{CL89,KWZ04,Z05} and \cite{AI89,B74} for corresponding abstract results. If $A$ is semibounded
from below and the essential spectrum satisfies $\min\sigma_\ess(A)>0$, then the nonreal spectrum of
$JA$ consists of at most finitely many eigenvalues, the essential spectrum $\sigma_\ess(JA)$ is real with a gap around $0$,
and $\sigma(JA)\cap\dR$ accumulates to  $+\infty$ and $-\infty$, see, e.g. \cite{CL89,L82} and \cite{KMWZ03}. The spectral analysis of $JA$ in the case
$\min\sigma_\ess(A)\leq 0$ is more difficult; we refer to \cite{B07,BP10,KT09} for more details
and to \cite{BKT09,BT07,CN95,DL77,KKM09,KM07} for related questions and further references.

The main objective of the present paper is to prove a local estimate on the number of eigenvalues of $JA$
in terms of the number of eigenvalues of $A$ in gaps of the essential spectrum in the left-definite case,
i.e., $\min\sigma(A)>0$.
In this situation it is not difficult to see that for $0\leq a<b$ we have
$$
(a,b) \cap \sigma_\ess(A)=\emptyset
\quad \mbox{if and only if} \quad
\bigl((-b,-a) \cup (a,b)\bigr) \cap \sigma_\ess(JA)=\emptyset.
$$
Our main result Theorem~\ref{mainthm1} reads as follows: If
 $(a,b)\cap\sigma_\ess(A)=\emptyset$, then the number of eigenvalues $n_A(a,b)$ of $A$
 in $(a,b)$ differs at most by three from the number
$n_{JA}(-b,-a)+n_{JA}(a,b)$ of eigenvalues
of $JA$ in $(-b,-a)\cup(a,b)$,
$$
\bigl|n_A(a,b)-\bigl(n_{JA}(-b,-a)+n_{JA}(a,b)\bigr) \bigl|\,\leq 3.
$$
Under the assumption
that the coefficients $p,q$ and $r$ are symmetric with respect to $0$ the estimate on the number of
eigenvalues is improved in Theorem~\ref{mainthm2} for intervals $(a,b)$ with the property $0\leq a <\min\sigma(A)<b\leq\min\sigma_\ess(A)$.
The above estimates also yield results on accumulation
properties of eigenvalues of $JA$.
More precisely, if, e.g., $b\in\sigma_\ess(A)$ and
the eigenvalues of $A$ in $(a,b)$ accumulate to $b$, then the eigenvalues of $JA$ in the gaps
$(-b,-a)$ and $(a,b)$ of the essential spectrum accumulate to $-b$ or $b$.
This allows to transfer results on the accumulation (or non-accumulation)
of eigenvalues to the boundary of the essential spectrum of definite Sturm-Liouville operators
(as, e.g., the classical Kneser criterion from \cite{K11} or recent extensions of it in \cite{GST96,GUe,KT,KT9})
into the left-definite setting; see Section~\ref{Kneserli} for more details.

The paper is organized as follows. In Section 2 the operators $A$, $JA$, and the Dirichlet operators
associated with the restrictions $\ell_+$ and $\ell_-$ of the definite differential expression $\ell$
onto $(c,\infty)$ and $(-\infty,c)$ are introduced and some simple
properties of their spectra and
essential spectra are collected. Section 3 establishes the connection of the poles and zeros of the Titchmarsh-Weyl coefficients
$m_+$, $m_-$ associated with $\ell_+$ and $\ell_-$, respectively,
 with the poles and zeros of the Titchmarsh-Weyl coefficient $M$
 associated with $\tau$. This connection is then used to
describe the isolated eigenvalues of $A$ and $JA$ in terms of the poles and zeros of the
functions $m_+$, $m_-$ and $M$.
The representation of the function $M$ in terms of a Nevanlinna function in Proposition~\ref{mnevanprop} and the corresponding monotonicity properties
are the crucial ingredients in the proofs of our main results Theorem~\ref{mainthm1} and Theorem~\ref{mainthm2} in Section~4.
In Section 5  Kneser's criterion is applied in the left-definite setting and the general results are
illustrated in this situation.
Furthermore, a class of periodic
problems is considered (see also \cite{K10,MZ05} and \cite[$\S\,$12.8]{Z05} for a slightly different indefinite
periodic situation), and
a simple solvable problem is briefly discussed.

\section{Preliminaries on definite and indefinite Sturm-Liouville operators}
Let  $r,p^{-1},q\in L^1_\loc(\dR)$ be real valued functions with $p>0$ and $r\not=0$
almost everywhere.
We consider the differential expressions
\begin{equation*}
\tau =\frac{1}{r}\left(-\frac{d}{dx}\,p\,\frac{d}{dx}+q\right)
\quad\text{and}\quad
\ell =\frac{1}{\vert r\vert}\left(-\frac{d}{dx}\,p\,\frac{d}{dx}+q\right)
\end{equation*}
from \eqref{tau2222} and \eqref{ell2222}. In this section we collect some simple properties
on the spectra of the associated maximal operators.
It is assumed
that the following condition (I) holds for the weight function $r$:
\begin{itemize}
 \item [{\rm (I)}] There exists $c\in\dR$ such that the restriction
 $r_+:=r\upharpoonright_{(c,\infty)}$ is positive  almost everywhere and
the restriction $r_-:=r\upharpoonright_{(-\infty,c)}$ is negative almost everywhere.
\end{itemize}
The restrictions of the functions $p$ and $q$ onto the intervals
$(c,\infty)$ and $(-\infty,c)$ will be denoted by $p_+$, $q_+$, $p_-$ and $q_-$, respectively.

The space of all (equivalence classes of) complex valued measurable functions $f$ such that
$\vert f\vert^2 \vert r\vert\in L^1(\dR)$ is denoted by $L^2_{\vert r\vert}(\dR)$.
Equipped with the scalar product
\begin{equation}\label{defprod}
(f,g):=\int_{\dR} f(x)\,\overline{g(x)}\,\vert r(x)\vert\,dx,\qquad f,g\in L^2_{\vert r\vert}(\dR),
\end{equation}
this space is a Hilbert space. The maximal operator $Af=\ell f$ associated with the definite
Sturm-Liouville expression $\ell$
in $L^2_{\vert r\vert}(\dR)$ is defined on the dense subspace
\begin{equation*}
\sD:=\bigl\{f\in L^2_{\vert r\vert}(\dR):f,pf^\prime\,\,\text{locally absolutely continuous},
\,\,\ell f\in L^2_{\vert r\vert}(\dR) \bigr\}.
\end{equation*}
We denote by $\sD_+$ and $\sD_-$ the space of functions on $(c,\infty)$ and $(-\infty,c)$ which
are restrictions of functions from $\sD$ onto $(c,\infty)$ and $(-\infty,c)$,
respectively.
Throughout this paper it will be assumed that $A$ satisfies the following condition~(II):
\begin{itemize}
 \item [{\rm (II)}] The maximal operator $Af=\ell f$ defined on $\dom A=\sD$ is selfadjoint in $L^2_{\vert r\vert}(\dR)$ and $\min\sigma(A)>0$ holds.
\end{itemize}
Recall that $A$ is selfadjoint if and only if the definite Sturm-Liouville expression $\ell$ is in the limit
point case at both singular endpoints $+\infty$ and $-\infty$.

Besides the definite inner product $(\cdot,\cdot)$ in \eqref{defprod} the space $L^2_{\vert r\vert}(\dR)$ will
also be equipped with the indefinite inner product $[\cdot,\cdot]$ defined by
\begin{equation*}\label{indefprod}
[f,g]:=\int_{\dR} f(x)\,\overline{g(x)}\,r(x)\,dx,\qquad f,g\in L^2_{\vert r\vert}(\dR).
\end{equation*}
The space $L^2_r(\dR)=(L^2_{\vert r\vert}(\dR),[\cdot,\cdot])$ is a Krein space, the inner products $(\cdot,\cdot)$
and $[\cdot,\cdot]$ are connected via the fundamental symmetry $(Jf)(x)=\sgn(r(x))f(x)$, $x\in\dR$, that is, the relations
\begin{equation*}
 (Jf,g)=[f,g]\qquad \text{and}\qquad [f,g]=(Jf,g),\qquad f,g\in L^2_{\vert r\vert}(\dR),
\end{equation*}
hold, see, e.g., \cite{AI89,B74}.
Note that formally we have $\tau=J\ell$.
The maximal operator associated with $\tau$ coincides with $JA$. This
operator is selfadjoint with respect to
the indefinite inner product $[\cdot,\cdot]$;
we shall say that $JA$ is $J$-selfadjoint in the Hilbert space  $L^2_{\vert r\vert}(\dR)$.
As a consequence of condition (II) and well-known properties of $J$-nonnegative operators (see, e.g., \cite{B74})
we obtain the next proposition.

\begin{proposition}\label{indefprop1}
Assume that conditions (I) and (II) hold.
Then the indefinite Sturm-Liouville operator
\begin{equation*}
JA f=\tau f= \frac{1}{r}\bigl(-(pf^\prime)^\prime +qf \bigr),
\qquad f\in \dom JA=\sD,
\end{equation*}
is a $J$-selfadjoint operator  in $L^2_{\vert r\vert}(\dR)$ with
$$
\sigma(JA)\subset \mathbb R \quad \mbox{and} \quad 0 \in \rho(JA).
$$
Each eigenvalue $\lambda$ of $JA$ is simple, i.e.,\ $\dim \ker (JA-\lambda)=1$
and there is no Jordan chain of length greater than one.
\end{proposition}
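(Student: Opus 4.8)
The plan is to read off all four assertions from the interplay between the selfadjoint operator $A$, the fundamental symmetry $J$, and the indefinite inner product $[\cdot,\cdot]$, reducing the nontrivial spectral claims to the known theory of $J$-nonnegative operators. First I would check $J$-selfadjointness by a direct computation with adjoints. Since $J$ is bounded with $J=J^*=J^{-1}$ and $A=A^*$ by (II), we have $(JA)^*=A^*J^*=AJ$, and therefore the Krein-space adjoint satisfies $(JA)^+=J(JA)^*J=J(AJ)J=JA$; thus $JA$ is $J$-selfadjoint.

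Next I would record the positivity and invertibility that drive everything else. For $f\in\sD\setminus\{0\}$ the identity $[f,g]=(Jf,g)$ together with $J^2=I$ gives
\[
[JAf,f]=(J\,JAf,f)=(Af,f)\ge\min\sigma(A)\,(f,f)>0,
\]
so $JA$ is uniformly $J$-positive. Moreover $0\in\rho(JA)$ is immediate: by (II) we have $0\in\rho(A)$, hence $A$ is a bijection of $\sD$ onto $L^2_{\vert r\vert}(\dR)$ with bounded inverse, and since $J$ is boundedly invertible the operator $JA$ is bijective with bounded inverse $(JA)^{-1}=A^{-1}J$.

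For the reality of the spectrum and the Jordan-chain statement I would invoke the spectral theory of definitizable operators (\cite{B74,AI89,L82}). The inequality $[JAf,f]\ge0$ exhibits $JA$ as $J$-nonnegative, that is, definitizable with definitizing function $p(\lambda)=\lambda$, whose only finite critical point is the zero $0$ of $p$. For such an operator with nonempty resolvent set the spectrum is real, and away from the critical points $0$ and $\infty$ the local spectral subspaces are definite, so every eigenvalue $\lambda\ne0$ is semisimple. Because $0\in\rho(JA)$, every eigenvalue is nonzero and hence admits no Jordan chain of length greater than one. The remaining claim $\dim\ker(JA-\lambda)=1$ is an ordinary-differential-equations fact rather than a Krein-space statement: $JAf=\lambda f$ means $\ell f=\lambda(\sgn r)f$, and the limit-point assumption at both endpoints---equivalent to selfadjointness of $A$ in (II)---allows at most one linearly independent square-integrable solution, forcing a one-dimensional eigenspace.

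The step I expect to be most delicate is the exclusion of Jordan chains, since a $J$-nonnegative operator may in principle carry a chain of length two, but only at a critical point. The decisive observation is that here the single finite critical point $0$ lies in the resolvent set, which removes this possibility for all (necessarily nonzero) eigenvalues; one must also make sure that the hypotheses of the abstract theorem---$J$-selfadjointness, $J$-nonnegativity, and $\rho(JA)\ne\emptyset$---are verified beforehand, which is exactly the content of the earlier steps. An alternative route to reality and semisimplicity is to note that $JA$ is similar to the selfadjoint operator $A^{1/2}JA^{1/2}$, although this requires some care with the domains of the unbounded factor $A^{1/2}$.
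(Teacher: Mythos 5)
Your proof is correct and takes essentially the same route as the paper, which states the proposition as a direct consequence of condition (II) and the well-known spectral theory of $J$-nonnegative operators, citing \cite{B74}. Your explicit verifications---$(JA)^{+}=JA$, the uniform $J$-positivity $[JAf,f]=(Af,f)\geq\min\sigma(A)\,(f,f)$, the inverse $(JA)^{-1}=A^{-1}J$ giving $0\in\rho(JA)$, and the limit-point argument for $\dim\ker(JA-\lambda)=1$---simply fill in the standard details the paper leaves to the cited literature, including the key observation that the only finite critical point $0$ lies in $\rho(JA)$, which excludes Jordan chains of length greater than one.
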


For a more detailed analysis of the spectrum of $JA$ it is useful to consider the (definite) differential
expressions
\begin{equation}\label{ellpm}
\ell_+ =\frac{1}{r_+}\left(-\frac{d}{dx}\,p_+\,\frac{d}{dx}+q_+\right)\quad\text{and}\quad
\ell_- =-\frac{1}{r_-}\left(-\frac{d}{dx}\,p_-\,\frac{d}{dx}+q_-\right)
\end{equation}
and the associated differential operators in the subspaces
$L^2_{r_+}(c,\infty)$ and $L^2_{-r_-}(-\infty,c)$  which consist of restrictions of functions from
$L^2_{\vert r\vert}(\dR)$ onto the intervals $(c,\infty)$ and $(-\infty,c)$, respectively.
It follows from condition (I) that $L^2_{r_+}(c,\infty)$ and $L^2_{-r_-}(-\infty,c)$ equipped with
\begin{equation*}
\begin{split}
(h_1,h_2)_+&=\int_c^\infty h_1(x)\,\overline{h_2(x)}\,r_+(x)\,dx,\qquad h_1,h_2\in L^2_{r_+}(c,\infty),\\
(k_1,k_2)_-&=\int_{-\infty}^c k_1(x)\,\overline{k_2(x)}\,(-r_-(x))\,dx,\qquad k_1,k_2\in L^2_{-r_-}(-\infty,c),
\end{split}
\end{equation*}
are Hilbert spaces.
Since $\ell$ is in the limit point case at $+\infty$ and $-\infty$ it follows that the (restricted) differential
expressions $\ell_+$ and $\ell_-$ are in the limit point case at $+\infty$ and $-\infty$, respectively, and regular
at $c$.
In Section \ref{Poerlitz} below we will make use of the Lagrange identities
\begin{equation}\label{lagpm}
\begin{split}
(\ell_+ h_1,h_2)_+-(h_1,\ell_+ h_2)_+&=(p_+h_1^\prime)(c)\overline{h_2(c)}-h_1(c)\overline{(p_+h_2^\prime)(c)},\\
(\ell_- k_1,k_2)_--(k_1,\ell_- k_2)_-&=-(p_-k_1^\prime)(c)\overline{k_2(c)}+k_1(c)\overline{(p_-k_2^\prime)(c)},
\end{split}
\end{equation}
which hold for all $h_1,h_2\in\sD_+$ and $k_1,k_2\in\sD_-$. The
Dirichlet operators
\begin{equation}\label{Geraberg}
\begin{split}
B_+ h=\ell_+ h,&\quad\dom B_+=\bigl\{h\in\sD_+:h(c)=0\bigr\},\\
B_- k=\ell_- k,&\quad\dom B_-=\bigl\{k\in\sD_-:k(c)=0\bigr\},
\end{split}
\end{equation}
associated with $\ell_+$ and $\ell_-$ in \eqref{ellpm} are selfadjoint in the Hilbert spaces $L^2_{r_+}(c,\infty)$ and $L^2_{-r_-}(-\infty,c)$, respectively. Then the orthogonal sums $B=B_+\oplus B_-$ and
$JB=B_+\oplus (-B_-)$ are selfadjoint operators
in $L^2_{\vert r\vert}(\dR)$.
The next lemma on the spectrum and essential spectrum of the selfadjoint operators $A$, $B$ and $B_\pm$ will be useful later. For a closed
operator $T$ in a Hilbert space the {\it essential spectrum} $\sigma_\ess(T)$ consists of all $\lambda\in\dC$
such that $T-\lambda$ is not a Fredholm operator. Note that for a selfadjoint operator or a $J$-nonnegative operator
$T$ with $\rho(T)\not=\emptyset$ the set $\sigma_\ess(T)$ coincides with those spectral points which are no isolated eigenvalues of finite multiplicity.

\begin{lemma}\label{lemma1}
Assume that conditions (I) and (II) are satisfied.
For the spectra of the operators $A$, $B$ and $B_\pm$ the following relations hold:
\begin{itemize}
 \item [{\rm (i)}]  $\min\sigma(A)\leq\min\sigma(B)$ and
 $\min\sigma(A)\leq\min\sigma(B_\pm)$;
 \item [{\rm (ii)}] $\sigma_\ess(A)=\sigma_\ess(B_+)\cup\sigma_\ess(B_-)=\sigma_\ess(B)$ and
 $\sigma_\ess(B_\pm)\subset\sigma_\ess(A)$;
 \item [{\rm (iii)}]
 $\min\sigma_\ess(A)=\min\{\min\sigma_\ess(B_+),\min\sigma_\ess(B_-)\}
     =\min\sigma_\ess(B)$.
\item [{\rm (iv)}] Denote by $E_A$ and $E_B$ the spectral functions of
$A$ and $B$, respectively. For an open interval $\Delta$ with
$\Delta \cap \sigma_\ess(A) =\emptyset$ the estimate
$$
\vert\dim\ran E_A(\Delta) - \dim\ran E_B(\Delta)\,\vert\leq 1
$$
holds
if the corresponding quantities are finite. Otherwise
$\dim\ran E_A(\Delta)=\infty$ if and only if
$\dim\ran E_B(\Delta)=\infty$.
\end{itemize}
\end{lemma}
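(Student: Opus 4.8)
The key structural fact is that $B = B_+ \oplus B_-$ is obtained from $A$ by imposing an additional Dirichlet boundary condition at the interface point $c$: both operators act via the same differential expression $\ell$, but $\dom B$ requires $f(c)=0$, whereas $\dom A$ does not. Concretely, $B$ is the restriction of $A$ to $\{f \in \sD : f(c)=0\}$, which is a symmetric restriction of codimension one. This single observation drives all four parts. I would set up the proof by first making the decoupling explicit: a function $f \in L^2_{\vert r\vert}(\dR)$ lies in $\dom B$ precisely when its restrictions $f_\pm$ lie in $\dom B_\pm$, so $B$ and $A$ differ only by whether the two half-line pieces are coupled through the boundary values $f(c)$ and $(pf')(c)$ at $c$.

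\emph{Parts (i)--(iii).} For (i), since $B$ and $B_\pm$ are selfadjoint restrictions of $A$ (or of the half-line maximal operators), their domains are contained in $\dom A$, and the min-max / form-domain monotonicity gives $\min\sigma(A) \le \min\sigma(B)$ and likewise for $B_\pm$; alternatively one uses that $B$ is a finite-rank-resolvent perturbation lying above $A$ in the ordering of selfadjoint extensions. For (ii), the equality $\sigma_\ess(A)=\sigma_\ess(B)$ is the statement that a one-dimensional boundary perturbation does not change the essential spectrum, which follows because $(A-\lambda)^{-1} - (B-\lambda)^{-1}$ is a finite-rank (indeed rank-one) operator for $\lambda \in \rho(A)\cap\rho(B)$, so $A-\lambda$ is Fredholm iff $B-\lambda$ is. The decomposition $\sigma_\ess(B)=\sigma_\ess(B_+)\cup\sigma_\ess(B_-)$ is immediate from the orthogonal direct sum, and $\sigma_\ess(B_\pm)\subset\sigma_\ess(A)$ then follows by combining these. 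Part (iii) is a direct consequence of (ii) by taking minima.

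\emph{Part (iv).} This is the quantitative heart of the lemma and the step I expect to be the main obstacle. The point is to upgrade the qualitative ``essential spectra agree'' to a sharp count: on an interval $\Delta$ free of essential spectrum, the spectral projections of $A$ and $B$ have ranks differing by at most one. I would prove this by exploiting that $B$ is a rank-one boundary restriction of $A$. The cleanest route is via the strict interlacing of eigenvalues under a symmetric restriction of defect one: if $B$ is the restriction of the selfadjoint $A$ to a subspace of codimension one (in the appropriate sense of boundary conditions), then between any two consecutive eigenvalues of $A$ there lies at most one eigenvalue of $B$ and vice versa, so the counting functions on any interval differ by at most one. Technically I would realize this through the resolvent difference being rank one, invoking the standard fact that a selfadjoint rank-one resolvent perturbation shifts eigenvalue counts on a spectral gap by at most one; equivalently, one compares $\dim\ran E_A(\Delta)$ and $\dim\ran E_B(\Delta)$ using that $E_B(\Delta)-E_A(\Delta)$ restricted to the relevant finite-dimensional reducing subspace has rank at most one. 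The subtle points requiring care are: justifying that the ``defect one'' coupling at $c$ genuinely produces a rank-one resolvent difference (this uses that $\ell_\pm$ are regular at $c$, so the boundary values $f(c),(pf')(c)$ are well-defined linear functionals), and handling the infinite-rank case, where one argues that $\dim\ran E_A(\Delta)=\infty$ forces $\dim\ran E_B(\Delta)=\infty$ by the same finite-rank-perturbation stability together with the fact (from (ii)) that the endpoints of $\Delta$, if accumulation points of eigenvalues, belong to the common essential spectrum.
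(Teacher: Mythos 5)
Your overall architecture matches the paper's (a rank-one resolvent difference for (ii)--(iv), monotonicity of spectral bounds for (i)), but it rests on a false structural claim: $B=B_+\oplus B_-$ is \emph{not} a restriction of $A$, and $\dom B\not\subset\dom A$. A selfadjoint operator admits no proper selfadjoint restriction; concretely, the restriction of $A$ to $\{f\in\sD: f(c)=0\}$ is the \emph{symmetric} operator $R$ with defect numbers $(1,1)$, and $B$ is a proper selfadjoint extension of $R$, not equal to it: a function $f\in\dom B$ satisfies $f(c)=0$, but the one-sided traces $(p_+f_+^\prime)(c)$ and $(p_-f_-^\prime)(c)$ need not coincide, whereas membership in $\sD=\dom A$ forces $pf^\prime$ to be continuous at $c$. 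You have conflated $B$ with $R$. This breaks your proof of (i) as stated: there is no operator-domain inclusion to feed into min--max (and min--max runs on forms, not operator domains, in any case). The conclusion can be rescued, either via your parenthetical remark that the \emph{form} of $B$ is a restriction of the form of $A$, or along the paper's route; but in both cases the required identification is exactly the nontrivial step for these singular coefficients. The paper handles it by restricting further: the operators $S_\pm$ with Dirichlet \emph{and} Neumann conditions at $c$ genuinely are operator restrictions of $A$, hence symmetric with lower bound at least $\min\sigma(A)$, and then a theorem of Rosenberger identifies $B_\pm$ as the Friedrichs extensions of $S_\pm$, which preserve lower bounds. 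As written, your (i) is unproven.

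For (ii)--(iv) your plan is correct and coincides with the paper's, once the justification is repaired: since $A$ and $B$ are two distinct selfadjoint extensions of the common symmetric operator $R$ with defect numbers $(1,1)$ --- the correct replacement for your ``codimension-one restriction'' picture --- one obtains $\dim\ran\bigl((B-\lambda)^{-1}-(A-\lambda)^{-1}\bigr)=1$ for $\lambda\in\rho(A)\cap\rho(B)$. This gives $\sigma_\ess(A)=\sigma_\ess(B)$, with the direct-sum identities and (iii) immediate, and the eigenvalue count in a gap, including the dichotomy in the infinite case, follows from the standard theorem on finite-rank resolvent perturbations (the paper cites Birman--Solomjak, \S 9.3, Theorem 3), which is precisely the ``rank-one perturbation shifts counts by at most one'' fact you invoke. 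So the quantitative part (iv), which you rightly identify as the heart of the lemma, is sound once the rank-one resolvent difference is derived from the common-extension picture rather than from the false inclusion $B\subset A$.
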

Observe that the case  $\dim\ran E_A(\Delta) =
\dim\ran E_B(\Delta)=\infty$ can only occur
 if one or both of the endpoints of $\Delta$
belong to the essential spectrum of $A$.

\begin{proof}
(i) Define the closed symmetric operators $S_+$ and $S_-$  in the Hilbert spaces $L^2_{r_+}(c,\infty)$ and $L^2_{-r_-}(-\infty,c)$
by
\begin{equation*}
S_+ h=\ell_+ h,\quad\dom S_+=\bigl\{h\in\sD_+:h(c)=(p_+h^\prime)(c)=0\bigr\}
\end{equation*}
and
\begin{equation*}
S_- k=\ell_- k,\quad\dom S_-=\bigl\{k\in\sD_-:k(c)=(p_-k^\prime)(c)=0\bigr\}.
\end{equation*}
As the orthogonal sum $S_+\oplus S_-$ is a restriction of $A$ it follows that $S_+\oplus S_-$ is a
symmetric operator with a lower bound larger or equal to $\min\sigma(A)$ which is positive by condition (II). Clearly, also
$S_+$ and $S_-$ are symmetric operators with lower bounds larger or equal to  $\min\sigma(A)$.
As $B_+$ and $B_-$ are the Friedrichs extensions of $S_+$ and $S_-$ (see \cite[Theorem~3 and Corollary~2]{R85})
also their lower bounds are larger or equal to $\min\sigma(A)$.  This shows the second statement in (i); the first assertion
in (i) is an immediate consequence.

The assertions in (ii) and (iii) follow from
\begin{equation}\label{resdiff}
\dim\ran \bigl((B-\lambda)^{-1}-(A-\lambda)^{-1}\bigr)=1,\qquad\lambda\in\rho(A)\cap\rho(B),
\end{equation}
whereas \eqref{resdiff} itself is a consequence of the fact that
$A$ and $B$ are selfadjoint extensions
of the symmetric operator $Rf=\ell f$, $\dom R=\{f\in\sD:f(c)=0\}$,
which has defect numbers $(1,1)$. This together with \cite[$\S\,$9.3, Theorem 3]{BS}
implies (iv).
\end{proof}

The following proposition on the essential spectrum of the indefinite Sturm-Liouville
operator $JA$ complements the statements in Proposition~\ref{indefprop1}.
It is a simple consequence of Lemma~\ref{lemma1} and
$\dim\ran((JA-\lambda)^{-1} - (JB-\lambda)^{-1}) =1$ for all
$\lambda\in\rho(JA) \cap \rho(JB)$. Note that $\rho(JA) \cap \rho(JB)\not=\emptyset$
by Proposition~\ref{indefprop1}.

\begin{proposition}\label{indefprop2}
Assume that conditions (I) and (II) hold.
Then the essential spectrum of the indefinite Sturm-Liouville operator $JA$ is given by
\begin{equation*}
\sigma_\ess(JA)=\sigma_\ess(JB)=
\bigl(\sigma_\ess(B_+)\cup\sigma_\ess(-B_-)\bigr)\,\subset\bigl(\sigma_\ess(A)\cup\sigma_\ess(-A)\bigr).
\end{equation*}
\end{proposition}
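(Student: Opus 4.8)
The plan is to reduce the statement to the two ingredients recorded just above it, namely Lemma~\ref{lemma1} and the rank-one resolvent difference between $JA$ and $JB$, and to exploit the orthogonal sum structure of $JB$.

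First I would treat the middle equality $\sigma_\ess(JB)=\sigma_\ess(B_+)\cup\sigma_\ess(-B_-)$. Since $JB=B_+\oplus(-B_-)$ acts as an orthogonal sum with respect to the decomposition $L^2_{|r|}(\dR)=L^2_{r_+}(c,\infty)\oplus L^2_{-r_-}(-\infty,c)$, for every $\lambda$ we have $JB-\lambda=(B_+-\lambda)\oplus(-B_--\lambda)$, and such a block-diagonal operator is Fredholm precisely when both diagonal blocks are Fredholm. Hence its essential spectrum is the union of the essential spectra of the two summands, which is exactly the claimed identity.

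Next I would establish $\sigma_\ess(JA)=\sigma_\ess(JB)$. By Proposition~\ref{indefprop1} the spectrum of $JA$ is real, so $\cmr\subset\rho(JA)$, while $JB$ is selfadjoint, so $\cmr\subset\rho(JB)$; in particular $\rho(JA)\cap\rho(JB)\neq\emptyset$ and I may fix some $\lambda_0$ in this intersection. The rank-one identity $\dim\ran\bigl((JA-\lambda_0)^{-1}-(JB-\lambda_0)^{-1}\bigr)=1$ noted before the proposition says that the two resolvents differ by a finite-rank, hence compact, operator. Since the essential spectrum is defined through the Fredholm property, invariance under this perturbation is a general fact about closed operators: writing $T-\lambda=(T-\lambda_0)\bigl(I-(\lambda-\lambda_0)(T-\lambda_0)^{-1}\bigr)$ shows that $\lambda\in\sigma_\ess(T)$ if and only if $(\lambda-\lambda_0)^{-1}$ lies in the (nonzero part of the) essential spectrum of the bounded resolvent $(T-\lambda_0)^{-1}$, and two bounded operators differing by a compact operator have the same essential spectrum. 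Applying this with $T=JA$ and with $T=JB$ yields the equality; this is precisely the mechanism of the cited \cite[$\S\,$9.3, Theorem~3]{BS}. The rank-one identity itself is obtained exactly as in the proof of Lemma~\ref{lemma1}: the operator $\tau f=J\ell f$ restricted to $\{f\in\sD:f(c)=0\}$ is a common restriction of $JA$ and $JB$ whose underlying symmetric operator $R$ has defect indices $(1,1)$, so any two of its extensions have resolvents differing by rank one.

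Finally I would prove the inclusion. Lemma~\ref{lemma1}(ii) gives $\sigma_\ess(B_+)\subset\sigma_\ess(A)$ and $\sigma_\ess(B_-)\subset\sigma_\ess(A)$. Because $-T-\lambda=-(T+\lambda)$ is Fredholm exactly when $T-(-\lambda)$ is, every closed operator satisfies $\sigma_\ess(-T)=-\sigma_\ess(T)$; thus $\sigma_\ess(-B_-)=-\sigma_\ess(B_-)\subset-\sigma_\ess(A)=\sigma_\ess(-A)$, and together with $\sigma_\ess(B_+)\subset\sigma_\ess(A)$ this gives $\sigma_\ess(B_+)\cup\sigma_\ess(-B_-)\subset\sigma_\ess(A)\cup\sigma_\ess(-A)$. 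The only point requiring any care is the invariance of the Fredholm essential spectrum under the rank-one resolvent perturbation in the merely $J$-selfadjoint setting, but as just explained this is a property of closed operators and does not rely on selfadjointness of $JA$.
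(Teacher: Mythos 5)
Your proof is correct and follows essentially the same route as the paper, which establishes the proposition precisely by combining Lemma~\ref{lemma1} with the rank-one resolvent difference $\dim\ran\bigl((JA-\lambda)^{-1}-(JB-\lambda)^{-1}\bigr)=1$ (coming from the common restriction with defect indices $(1,1)$, as in the proof of Lemma~\ref{lemma1}) and the observation that $\rho(JA)\cap\rho(JB)\neq\emptyset$ by Proposition~\ref{indefprop1}. Your expanded justification of why the Fredholm essential spectrum is stable under this finite-rank resolvent perturbation even though $JA$ is only $J$-selfadjoint merely makes explicit what the paper leaves implicit.
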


\section{The function $M$}\label{Poerlitz}

In this section we define a function $M$ with the help of Titchmarsh-Weyl coefficients $m_+$ and $m_-$
associated with the differential expressions $\ell_+$ and $\ell_-$ in \eqref{ellpm}. Since it turns out that
the zeros of $M$ coincide with the isolated eigenvalues of the indefinite Sturm-Liouville operator $JA$
we shall study the monotonicity properties of $M$, which then lead to eigenvalue estimates in the next section.
As a byproduct we also obtain a result on the size of the spectral gap of $JA$ around zero in Proposition~\ref{gapcor} below.

Assume throughout this section that conditions (I) and (II) hold and
let $B_+$ and $B_-$ be the selfadjoint Dirichlet operators in the Hilbert spaces
$L^2_{r_+}(c,\infty)$ and $L^2_{-r_-}(-\infty,c)$ from \eqref{Geraberg}, and let
$\lambda\in\rho(B_+)$ and $\mu\in\rho(B_-)$.
As $\ell_+$ and $\ell_-$ are in the limit point case at $+\infty$ and
at $-\infty$, respectively, there are
 unique (up to a constant multiple) solutions
  $h_\lambda\in\sD_+$ and $k_\mu\in \sD_-$
 of the differential equations
\begin{equation*}
\ell_+ h = \lambda h\qquad\text{and}\qquad \ell_- k = \mu k.
\end{equation*}
The functions $m_\pm:\rho(B_\pm)\rightarrow\dC$ are defined by
\begin{equation*}
 m_+(\lambda):=\frac{(p_+h_\lambda^\prime)(c)}{h_\lambda(c)}\qquad\text{and}\qquad
 m_-(\mu):=\frac{(p_-k_\mu^\prime)(c)}{k_\mu(c)}.
\end{equation*}

It is obvious that the poles of $m_\pm$ coincide with the isolated eigenvalues of $B_\pm$,
and that the poles of the function $\lambda\mapsto m_-(-\lambda)$ coincide with the isolated eigenvalues of
$-B_-$.
The functions $m_\pm$ are holomorphic on $\rho(B_\pm)$, they do not admit analytic extensions
to points of $\sigma(B_\pm)$, and they are
symmetric with respect to the real axis, i.e.,
\begin{equation*}
m_+(\bar\lambda)=\overline{m_+(\lambda)}\quad\text{and}\quad m_-(\bar\mu)=\overline{m_-(\mu)}.
\end{equation*}
If
we fix solutions $h_\lambda$ and $k_\mu$ with $h_\lambda(c)=1$ and $k_\mu(c)=1$ it follows from
\eqref{lagpm} that the relations
\begin{equation*}
\begin{split}
(\lambda-\bar\lambda)(h_\lambda,h_\lambda)_+&=(\ell_+ h_\lambda,h_\lambda)_+-(h_\lambda,\ell_+ h_\lambda)_+
=m_+(\lambda)-\overline{m_+(\lambda)},\\
(\bar\mu-\mu)(k_\mu,k_\mu)_-&=(k_\mu,\ell_- k_\mu)_--(\ell_- k_\mu,k_\mu)_- = m_-(\mu)-\overline{m_-(\mu)},
\end{split}
\end{equation*}
hold. Therefore, $\pm m_\pm$ are so-called Nevanlinna functions.
Recall that a complex-valued function $N$ is said to be a  Nevanlinna function if $N$ is holomorphic on $\dC\backslash\dR$ and the properties
\begin{equation*}
N(\bar\lambda)=\overline{N(\lambda)}\quad\text{and}\quad \frac{\Im N(\lambda)}{\Im\lambda} \geq 0
\end{equation*}
hold for all $\lambda\in\dC\backslash\dR$. For later purposes it is important to note
that a Nevanlinna function $N$ is monotone increasing on real intervals which belong to its domain of holomorphy and that $N$ is equal to a
constant on such an interval if and only if $N$ is a constant
function on $\mathbb C$, see, e.g., \cite{KK}.

In the following we will relate the zeros and poles of the function
\begin{equation}\label{mfct}
 M(\lambda):=m_+(\lambda)-m_-(-\lambda),\qquad\lambda\in\rho(B_+)\cap\rho(-B_-),
\end{equation}
with the eigenvalues of the operators $JA$ and $JB$.
Clearly, the domain of holomorphy of $M$ contains the interval $(-\min\sigma(B_-),\min\sigma(B_+))$,
the poles of $M$ in $[\min\sigma(B_+),\infty)$ and $(-\infty,-\min\sigma(B_-)]$ coincide with the poles of $\lambda \mapsto m_+(\lambda)$ and
$\lambda \mapsto m_-(-\lambda)$, respectively.
Hence each pole of $M$ in $[\min\sigma(B_+),\infty)$ is an isolated eigenvalue of
$B_+$ and each pole of $M$ in $(-\infty,-\min\sigma(B_-)]$ is an isolated eigenvalue of $-B_-$.
Therefore, each pole of $M$ is an isolated eigenvalue of the operator $JB=B_+\oplus (-B_-)$, and,
vice versa, every isolated eigenvalue of $JB$ is a pole of $M$.
 This shows assertion (ii) in the next proposition.

\begin{proposition}\label{polezeroprop}
For $\lambda\not\in\sigma_\ess(JA)$ the following assertions hold:
\begin{itemize}
 \item [{\rm (i)}] $\lambda\in\sigma_p(JA)$ if and only if $\lambda$ is a zero of $M$;
 \item [{\rm (ii)}] $\lambda\in\sigma_p(JB)$ if and only if $\lambda$ is a pole of $M$.
\end{itemize}
\end{proposition}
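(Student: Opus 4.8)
The plan is to prove assertion (i), since (ii) has already been established in the discussion preceding the proposition. Throughout we work with $\lambda\notin\sigma_\ess(JA)$, so by Proposition~\ref{indefprop2} we have $\lambda\in\rho(B_+)\cap\rho(-B_-)$ whenever $\lambda$ is not an isolated eigenvalue of $JB=B_+\oplus(-B_-)$, and the function $M(\lambda)=m_+(\lambda)-m_-(-\lambda)$ from \eqref{mfct} is defined and holomorphic at such points. The key idea is that an eigenfunction of $JA$ must, by definition of the maximal operator, solve $\tau f=\lambda f$ on each half-line, hence restrict to the limit-point solutions $h_\lambda$ on $(c,\infty)$ and $k_{-\lambda}$ on $(-\infty,c)$ (note that on $(-\infty,c)$ the equation $\tau f=\lambda f$ becomes $\ell_- k=-\lambda k$ because of the sign in \eqref{ellpm}), together with the requirement that $f$ and $pf'$ be continuous at the matching point $c$.

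First I would fix $\lambda\notin\sigma_\ess(JA)$ and suppose $\lambda\in\sigma_p(JA)$. Let $f$ be a corresponding eigenfunction, and write $h:=f\uphar_{(c,\infty)}$ and $k:=f\uphar_{(-\infty,c)}$. Since $f\in\sD$ and $\tau f=\lambda f$, the restrictions satisfy $\ell_+ h=\lambda h$ and $\ell_- k=-\lambda k$, with $h\in L^2_{r_+}(c,\infty)$ and $k\in L^2_{-r_-}(-\infty,c)$. Because $\ell_\pm$ is in the limit point case at the singular endpoint, these square-integrable solutions are unique up to scalar multiples, so $h$ is a multiple of $h_\lambda$ and $k$ is a multiple of $k_{-\lambda}$. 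The crucial step is the matching condition at $c$: since $f$ and $pf'$ are (locally) absolutely continuous on all of $\dR$, we have $h(c)=k(c)$ and $(p_+h')(c)=(p_-k')(c)$. Using $f\not\equiv 0$ one checks $h(c)=k(c)\neq 0$ (otherwise $h$ and $k$ would both vanish to first order at $c$ and hence be identically zero by limit-point uniqueness). Normalising $h_\lambda(c)=k_{-\lambda}(c)=1$, the matching of the quasi-derivatives then reads $(p_+h_\lambda')(c)=(p_-k_{-\lambda}')(c)$, i.e.\ $m_+(\lambda)=m_-(-\lambda)$, which is precisely $M(\lambda)=0$.

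For the converse I would run the argument backwards: if $M(\lambda)=0$, then $m_+(\lambda)=m_-(-\lambda)$, so the normalised limit-point solutions $h_\lambda$ on $(c,\infty)$ and $k_{-\lambda}$ on $(-\infty,c)$ satisfy $h_\lambda(c)=k_{-\lambda}(c)=1$ and $(p_+h_\lambda')(c)=(p_-k_{-\lambda}')(c)$. Gluing them into a single function $f$ on $\dR$ yields $f$ and $pf'$ locally absolutely continuous across $c$, with $f\in L^2_{\vert r\vert}(\dR)$ and $\tau f=\lambda f$; thus $f\in\sD=\dom JA$ is a genuine eigenfunction and $\lambda\in\sigma_p(JA)$. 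The main obstacle I anticipate is the bookkeeping at $c$: one must verify that $\lambda\notin\sigma_\ess(JA)$ really guarantees $\lambda\in\rho(B_+)\cap\rho(-B_-)$ so that $h_\lambda,k_{-\lambda}$ and hence $M(\lambda)$ are well defined and the denominators $h_\lambda(c),k_{-\lambda}(c)$ do not vanish, and to rule out the degenerate case where $h_\lambda(c)=0$ or $k_{-\lambda}(c)=0$ (a pole of $m_\pm$) is mistaken for a matching solution. This is exactly where the limit-point uniqueness of the square-integrable solutions and the simplicity of eigenvalues from Proposition~\ref{indefprop1} are needed.
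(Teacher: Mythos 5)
Your overall route---restricting an eigenfunction of $JA$ to the two half-lines, invoking limit-point uniqueness of the square-integrable solutions, and matching $f$ and $pf'$ at $c$ to translate $\lambda\in\sigma_p(JA)$ into $m_+(\lambda)=m_-(-\lambda)$, with the gluing construction for the converse---is exactly the paper's argument. The genuine gap is the nondegeneracy step $h(c)=k(c)\neq 0$, which you flag as ``the main obstacle'' but then justify incorrectly: if $h(c)=0$, limit-point uniqueness does \emph{not} force $h\equiv 0$. A nontrivial solution of $\ell_+h=\lambda h$ in $\sD_+$ may well vanish at the regular endpoint $c$ (with $(p_+h')(c)\neq 0$); it is then precisely a Dirichlet eigenfunction, i.e.\ $\lambda\in\sigma_p(B_+)$, a pole of $m_+$. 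Likewise $k(c)=0$ with $k$ nontrivial means $\lambda\in\sigma_p(-B_-)$. (Only a solution vanishing together with its quasi-derivative at $c$ is trivial, by uniqueness of the initial value problem at the regular point $c$---not by the limit-point property at the singular endpoint.) What actually excludes the degenerate case is left-definiteness, not limit-point uniqueness or the simplicity of eigenvalues from Proposition~\ref{indefprop1} that you invoke: by Lemma~\ref{lemma1}~(i) and condition (II), $\sigma(B_\pm)\subset[\min\sigma(A),\infty)$ with $\min\sigma(A)>0$, so $\sigma_p(B_+)\subset(0,\infty)$ and $\sigma_p(-B_-)\subset(-\infty,0)$ are disjoint; hence $h$ and $k$ cannot both be nontrivial Dirichlet eigenfunctions for the same $\lambda$, and if one of them vanishes identically, the matching of the quasi-derivatives forces the other to vanish too, so $f=0$.

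The same mechanism disposes of the points $\lambda\in\sigma_p(JB)\setminus\sigma_\ess(JA)$ (the poles of $M$), which your proposal leaves open: the paper first derives $\sigma_p(JA)\cap\sigma_p(B_+)\cap\rho(-B_-)=\emptyset$ and $\sigma_p(JA)\cap\sigma_p(-B_-)\cap\rho(B_+)=\emptyset$ from the matching conditions, and $\sigma_p(B_+)\cap\sigma_p(-B_-)=\emptyset$ from positivity, thereby reducing assertion (i) to $\lambda\in\rho(B_+)\cap\rho(-B_-)$, where your computation applies verbatim. Without this reduction the biconditional in (i) is simply not established at poles of $M$. That this step carries real content, and is not bookkeeping one may defer, is visible in the companion Proposition~\ref{polezeroprop2}~(i) for $A$: there $\sigma_p(B_+)\cap\sigma_p(B_-)$ need not be empty, and $A$ does have eigenvalues at common poles of $m_+$ and $m_-$, so the clean statement ``eigenvalue iff zero of $M$'' for $JA$ genuinely uses $\min\sigma(A)>0$.
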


\begin{proof}
It remains to show assertion (i). For this observe first that  $\lambda\not\in\sigma_\ess(JA)=\sigma_\ess(B_+)\cup\sigma_\ess(-B_-)$ is an eigenvalue of $JA$
with corresponding eigenfunction $f_\lambda\in\sD$ if and only if $f_\lambda=h_\lambda\oplus k_{-\lambda}$,
where $h_\lambda\in\sD_+$ and $k_{-\lambda}\in\sD_-$ are
the (nontrivial) restrictions of
$f_\lambda$ onto $(c,\infty)$ and
$(-\infty,c)$, respectively, which satisfy
the differential equations
\begin{equation}\label{diff1}
\ell_+ h_\lambda=\lambda h_\lambda,\qquad \ell_-k_{-\lambda}=-\lambda k_{-\lambda},
\end{equation}
and the conditions
\begin{equation}\label{diff2}
h_\lambda(c)=k_{-\lambda}(c),\qquad (p_+h_\lambda^\prime)(c)=(p_-k_{-\lambda}^\prime)(c).
\end{equation}
As a simple consequence we conclude
\begin{equation*}
\sigma_p(JA)\cap\sigma_p(B_+)\cap\rho(-B_-) =\emptyset
\quad\text{and}\quad\sigma_p(JA)\cap\sigma_p(-B_-)\cap\rho(B_+) = \emptyset.
\end{equation*}
Furthermore, $\sigma_p(B_+)\cap\sigma_p(-B_-)=\emptyset$ by Lemma~\ref{lemma1}~(i)
and condition (II) and, hence,
it is sufficient to prove the equivalence in (i) for
$\lambda\in\rho(B_+)\cap\rho(-B_-)$.

Assume first that $\lambda\in\sigma_p(JA)\cap\rho(B_+)\cap\rho(-B_-)$, so that \eqref{diff1} and \eqref{diff2} hold for some corresponding
eigenfunction
 $f_\lambda=h_\lambda\oplus k_{-\lambda}$ of $JA$ and $h_\lambda(c)=k_{-\lambda}(c)\not =0$.
This yields
\begin{equation}\label{mpmm}
m_+(\lambda)=\frac{(p_+h_\lambda^\prime)(c)}{h_\lambda(c)}=\frac{(p_-k_{-\lambda}^\prime)(c)}{k_{-\lambda}(c)}
=m_-(-\lambda)
\end{equation}
and hence $M(\lambda)=0$. Conversely, let $\lambda\in\rho(B_+)\cap\rho(-B_-)$ be a zero of $M$ and let $h_\lambda\in\sD_+$ and $k_{-\lambda}\in\sD_-$ be (nontrivial) solutions of \eqref{diff1}
which satisfy $h_\lambda(c)=k_{-\lambda}(c)\not= 0$. From $M(\lambda)=0$ we obtain $m_+(\lambda)=m_-(-\lambda)$ and it follows from
\eqref{mpmm} that also the second condition in \eqref{diff2} is satisfied by $h_\lambda$ and $k_{-\lambda}$. Therefore $f_\lambda:=h_\lambda\oplus
k_{-\lambda}$ belongs to $\sD$ and is an eigenfunction of $JA$ corresponding to $\lambda$.
\end{proof}

In a similar way as in Proposition~\ref{polezeroprop}  the eigenvalues
of $A$ and of $B=B_+\oplus B_-$ are related to the poles and zeros of the
functions $m_+$ and $m_-$. Since the isolated eigenvalues
of $B_+$ and $B_-$ coincide with the poles of $m_+$ and $m_-$  it is clear that $\lambda$ is an  eigenvalue
of $B$ if and only if $\lambda$ is a pole of $m_+$ or $m_-$; this shows item (ii) in the
next proposition.
For the convenience of the reader also the first item will be shown in detail.

\begin{proposition}\label{polezeroprop2}
For $\lambda\not\in\sigma_\ess(A)$ the following assertions hold:
\begin{itemize}
 \item [{\rm (i)}] $\lambda\in\sigma_p(A)$ if and only if $\lambda$
is a either a zero of $m_+-m_-$ or a pole of both $m_+$ and $m_-$;
 \item [{\rm (ii)}] $\lambda\in\sigma_p(B)$ if and only if $\lambda$
is a pole of $m_+$ or of $m_-$.
\end{itemize}
\end{proposition}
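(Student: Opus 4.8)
Only assertion (i) requires an argument, since (ii) has already been obtained from the fact that the poles of $m_+$ and $m_-$ are precisely the isolated eigenvalues of $B_+$ and $B_-$. The plan is to run the argument of Proposition~\ref{polezeroprop} again, but with $\ell_- k_{-\lambda}=-\lambda k_{-\lambda}$ replaced by $\ell_- k_\lambda=\lambda k_\lambda$: on $(-\infty,c)$ one has $\vert r_-\vert=-r_-$, so the restriction of $\ell$ to $(-\infty,c)$ equals $\ell_-$ and not $-\ell_-$. Concretely I would first record the eigenfunction characterization: by Lemma~\ref{lemma1}(ii) we have $\sigma_\ess(A)=\sigma_\ess(B_+)\cup\sigma_\ess(B_-)$, so for $\lambda\notin\sigma_\ess(A)$ the point $\lambda$ is an eigenvalue of $A$ with eigenfunction $f_\lambda\in\sD$ if and only if $f_\lambda=h_\lambda\oplus k_\lambda$ with nontrivial $h_\lambda\in\sD_+$, $k_\lambda\in\sD_-$ satisfying
\[
\ell_+ h_\lambda=\lambda h_\lambda,\qquad \ell_- k_\lambda=\lambda k_\lambda,
\]
together with the matching conditions $h_\lambda(c)=k_\lambda(c)$ and $(p_+h_\lambda^\prime)(c)=(p_-k_\lambda^\prime)(c)$, which express local absolute continuity of $f_\lambda$ and $p f_\lambda^\prime$ across $c$. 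Since $\lambda\notin\sigma_\ess(B_\pm)$, each of $B_+$, $B_-$ either has $\lambda$ in its resolvent set or has $\lambda$ as an isolated eigenvalue (a pole of $m_+$, resp.\ $m_-$), and I would organise the proof along the resulting four cases.

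In the generic case $\lambda\in\rho(B_+)\cap\rho(B_-)$ the unique (up to scalars) $L^2$-solutions satisfy $h_\lambda(c)\neq 0\neq k_\lambda(c)$, since a value $0$ at $c$ would exhibit $\lambda$ as a Dirichlet eigenvalue; normalising $h_\lambda(c)=k_\lambda(c)=1$ makes the first matching condition automatic, and the second becomes $m_+(\lambda)=m_-(\lambda)$ exactly as in \eqref{mpmm}. Thus on $\rho(B_+)\cap\rho(B_-)$ the point $\lambda$ lies in $\sigma_p(A)$ if and only if it is a zero of $m_+-m_-$, which is the first alternative in (i).

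The step genuinely different from Proposition~\ref{polezeroprop}, and the one I expect to be the main obstacle, is the treatment of the poles: there $\sigma_p(B_+)$ and $\sigma_p(-B_-)$ were automatically disjoint (they lie in $(0,\infty)$ and $(-\infty,0)$), whereas here $B_+$ and $B_-$ may share eigenvalues, and this is what the second alternative in (i) records. If $\lambda$ is a pole of exactly one of $m_+$, $m_-$, say $\lambda\in\sigma_p(B_+)$ and $\lambda\in\rho(B_-)$, then every nontrivial $L^2$-solution on $(c,\infty)$ is a multiple of the $B_+$-eigenfunction and hence vanishes at $c$, while $k_\lambda(c)\neq 0$; the first matching condition fails, so $\lambda\notin\sigma_p(A)$, consistently with $\lambda$ being a pole (not a zero) of $m_+-m_-$ and not a common pole. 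If instead $\lambda\in\sigma_p(B_+)\cap\sigma_p(B_-)$, then both eigenfunctions satisfy $h_\lambda(c)=k_\lambda(c)=0$, while $(p_+h_\lambda^\prime)(c)\neq 0\neq(p_-k_\lambda^\prime)(c)$ by uniqueness for the initial value problem; rescaling so that $(p_+h_\lambda^\prime)(c)=(p_-k_\lambda^\prime)(c)$ makes $f_\lambda=h_\lambda\oplus k_\lambda$ satisfy both matching conditions, whence $f_\lambda\in\sD$ is an eigenfunction and $\lambda\in\sigma_p(A)$. Combining the three cases gives (i); the only bookkeeping to check is that the two alternatives are mutually exclusive and exhaustive among eigenvalues, which holds because a zero of $m_+-m_-$ can occur only where both functions are holomorphic, i.e.\ in $\rho(B_+)\cap\rho(B_-)$.
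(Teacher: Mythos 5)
Your proof is correct and takes essentially the same approach as the paper's: the same characterization of eigenfunctions $f_\lambda=h_\lambda\oplus k_\lambda$ via the matching conditions at $c$, the same case distinction according to whether $\lambda$ lies in $\rho(B_\pm)$ or $\sigma_p(B_\pm)$, and the same rescaling of $h_\lambda$ to match the derivatives in the common-pole case. The paper merely organizes the argument slightly differently, recording first the disjointness relations $\sigma_p(A)\cap\sigma_p(B_+)\cap\rho(B_-)=\emptyset$ and $\sigma_p(A)\cap\sigma_p(B_-)\cap\rho(B_+)=\emptyset$ (your ``exactly one pole'' case) and then treating the two directions of the equivalence, but the mathematical content is identical.
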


\begin{proof}
It remains to show assertion (i). For this observe first that  $\lambda\not\in\sigma_\ess(A)=\sigma_\ess(B_+)\cup\sigma_\ess(B_-)$ is an eigenvalue of $A$
with corresponding eigenfunction $f_\lambda\in\sD$ if and only if $f_\lambda=h_\lambda\oplus k_{\lambda}$,
where $h_\lambda\in\sD_+$ and $k_{\lambda}\in\sD_-$ are the (nontrivial) restrictions
of $f_\lambda$ onto $(c,\infty)$ and
$(-\infty,c)$, respectively, which
satisfy the differential equations
\begin{equation}\label{diff1A}
\ell_+ h_\lambda=\lambda h_\lambda,\qquad \ell_-k_{\lambda}=\lambda k_{\lambda},
\end{equation}
and the conditions
\begin{equation}\label{diff2A}
h_\lambda(c)=k_{\lambda}(c),\qquad (p_+h_\lambda^\prime)(c)=(p_-k_{\lambda}^\prime)(c).
\end{equation}
Hence
\begin{equation*}
\sigma_p(A)\cap\sigma_p(B_+)\cap\rho(B_-) =\emptyset
\quad\text{and}\quad\sigma_p(A)\cap\sigma_p(B_-)\cap\rho(B_+) = \emptyset .
\end{equation*}

Assume first that $\lambda$ is an eigenvalue of $A$. Then either
$\lambda\in\rho(B_+) \cap \rho(B_-)$
or $\lambda\in\sigma_p(B_+)\cap\sigma_p(B_-)$. In the first case \eqref{diff2A} implies $m_+(\lambda)=m_-(\lambda)$ and hence
$\lambda$ is a zero of $m_+-m_-$. In the second case $\lambda$ is a pole of both
$m_+$ and $m_-$.
Conversely, let
$h_\lambda\in\sD_+$ and $k_{\lambda}\in\sD_-$ be nontrivial
solutions of \eqref{diff1A}
which satisfy $h_\lambda(c)=k_{\lambda}(c)$. If $\lambda$ is a zero of $m_+-m_-$ then
$\lambda\in\rho(B_+)\cap\rho(B_-)$ and $h_\lambda(c)=k_{\lambda}(c)\not=0$, so that
the assumption $m_+(\lambda)-m_-(\lambda)=0$ implies the second condition in \eqref{diff2A}. Therefore $f_\lambda:=h_\lambda\oplus
k_{\lambda}$ belongs to $\sD$ and is an eigenfunction of $A$
corresponding to $\lambda$. If $\lambda$ is a pole
of $m_+$ and of $m_-$, then $\lambda \in\sigma_p(B_+)\cap\sigma_p(B_-)$
and
hence the nontrivial solutions $h_\lambda\in\sD_+$ and $k_{\lambda}\in\sD_-$ of
\eqref{diff1A} satisfy $h_\lambda(c)=k_{\lambda}(c)=0$ and
$(p_+h_{\lambda}^\prime)(c)\not= 0$ and $(p_-k_\lambda^\prime)(c)\not= 0$.
Since $h_\lambda$ and $k_{\lambda}$ are unique up to a constant
multiple, it follows that the function
$$
f_\lambda:=(\nu h_\lambda)\oplus k_{\lambda},\qquad\text{where}\,\,\, \nu:=\frac{(p_-k_\lambda^\prime)(c)}{(p_+h_{\lambda}^\prime)(c)},
$$
belongs to  $\sD$ and is an eigenfunction of $A$
corresponding to $\lambda$.
\end{proof}

As a consequence of the above propositions we obtain a statement on the size of the spectral
gap of $JA$ around $0$; cf. Proposition~\ref{indefprop1}. We mention that item (ii) in the next proposition
can also be deduced from \cite[Behauptung 3]{T74} applied to the inverses of $A$ and $JA$.

\begin{proposition}\label{gapcor}
Assume that conditions (I) and (II) are satisfied. Then the following statements hold:
\begin{itemize}
\item [{\rm (i)}]
If $\min\sigma(A)<\min\sigma_\ess(A)$, then $[-\min\sigma(A),\min\sigma(A)]\subset\rho(JA)$;
\item [{\rm (ii)}] If $\min \sigma(A)=
\min\sigma_\ess (A)$, then $(-\min\sigma(A),\min\sigma(A))\subset\rho(JA)$.
\end{itemize}
\end{proposition}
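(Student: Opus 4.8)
The plan is to reduce the statement entirely to the location of the zeros of the function $M$ from \eqref{mfct}. Write $\gamma:=\min\sigma(A)$. First I would dispose of the essential spectrum: by Proposition~\ref{indefprop2} together with Lemma~\ref{lemma1}~(ii)--(iii) one has $\sigma_\ess(JA)=\sigma_\ess(B_+)\cup(-\sigma_\ess(B_-))$ with $\min\sigma_\ess(B_\pm)\ge\min\sigma_\ess(A)$. Hence in case~(i) (where $\gamma<\min\sigma_\ess(A)$) the \emph{closed} interval $[-\gamma,\gamma]$ is disjoint from $\sigma_\ess(JA)$, and in case~(ii) the \emph{open} interval $(-\gamma,\gamma)$ is disjoint from $\sigma_\ess(JA)$. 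On such an interval Proposition~\ref{polezeroprop}~(i) applies, so a point belongs to $\sigma(JA)$ exactly when it is a zero of $M$; it therefore suffices to show that $M$ has no zeros in the relevant interval.

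For the open interval I would exploit the monotonicity built into the Titchmarsh--Weyl coefficients. Since $m_+$ and $-m_-$ are Nevanlinna functions, $m_+$ is strictly increasing and $m_-$ is strictly decreasing on $(-\infty,\min\sigma(B_+))$, respectively $(-\infty,\min\sigma(B_-))$, both of which contain $(-\infty,\gamma)$ because $\gamma\le\min\sigma(B_\pm)$ by Lemma~\ref{lemma1}~(i). Moreover, on $(-\infty,\gamma)$ the function $m_+-m_-$ is continuous with no poles (no eigenvalues of $B_\pm$ below $\gamma$) and, by Proposition~\ref{polezeroprop2}~(i), no zeros (no eigenvalues of $A$ below $\gamma=\min\sigma(A)$); hence it has a constant, nonzero sign there. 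I would not determine this sign, since it is not needed. Writing $M(\lambda)=m_+(\lambda)-m_-(-\lambda)$, one has the two decompositions $M(\lambda)=\bigl(m_+(\lambda)-m_-(\lambda)\bigr)+\bigl(m_-(\lambda)-m_-(-\lambda)\bigr)$ and $M(\lambda)=\bigl(m_+(\lambda)-m_+(-\lambda)\bigr)+\bigl(m_+(-\lambda)-m_-(-\lambda)\bigr)$. In each, the first summand is $m_+-m_-$ evaluated at $\lambda$ or at $-\lambda$, hence of the fixed sign, while the second is controlled by monotonicity: for $\lambda\ge0$ one has $m_-(\lambda)\le m_-(-\lambda)$ and $m_+(\lambda)\ge m_+(-\lambda)$, and for $\lambda\le0$ the reverse. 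Choosing for each $\lambda\in(-\gamma,\gamma)$ the decomposition in which both summands carry the same sign then shows that $M$ is everywhere negative (if $m_+-m_-<0$) or everywhere positive (if $m_+-m_->0$) on $(-\gamma,\gamma)$; in particular $M$ has no zeros there. This already yields (ii) and the open part of (i).

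For the endpoints $\pm\gamma$ in case~(i), where $\gamma$ is an isolated eigenvalue of $A$, I would again invoke Proposition~\ref{polezeroprop2}~(i): since $\gamma\le\min\sigma(B_\pm)$, either $\gamma<\min\sigma(B_+)$ and $\gamma<\min\sigma(B_-)$, so that $\gamma$ is a zero of $m_+-m_-$, or $\gamma=\min\sigma(B_+)=\min\sigma(B_-)$ is a common simple pole of $m_+$ and $m_-$ (any remaining possibility would make $\gamma$ neither a zero of $m_+-m_-$ nor a common pole, forcing $\gamma\notin\sigma_p(A)$, a contradiction). In the first case $M$ is holomorphic near $\pm\gamma$, and using $m_+(\gamma)=m_-(\gamma)$ strict monotonicity gives $M(\gamma)=m_-(\gamma)-m_-(-\gamma)\ne0$ and $M(-\gamma)=m_+(-\gamma)-m_+(\gamma)\ne0$; in the second case $\pm\gamma$ are poles of $M$, hence not zeros. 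Either way $\pm\gamma\notin\sigma_p(JA)$ by Proposition~\ref{polezeroprop}~(i), and since $\pm\gamma\notin\sigma_\ess(JA)$ we conclude $[-\gamma,\gamma]\subset\rho(JA)$.

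The hard part is that $M$ is \emph{not} itself a Nevanlinna function and need not be monotone on $(-\gamma,\gamma)$ --- in the common-pole situation it even tends to $+\infty$ at both ends of its interval of holomorphy --- so the absence of zeros cannot be read off from a single monotonicity statement. The device that resolves this is the sign-splitting above, which trades the unavailable monotonicity of $M$ for the monotonicity of $m_+$ and of $m_-$ separately, combined with the constancy of the sign of $m_+-m_-$ below $\min\sigma(A)$.
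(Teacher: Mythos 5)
Your proof is correct and takes essentially the same route as the paper: reduce via Proposition~\ref{polezeroprop}~(i) to showing $M\neq 0$, use the non-vanishing of $m_+-m_-$ below $\min\sigma(A)$ (from Proposition~\ref{polezeroprop2}~(i)) together with the Nevanlinna monotonicity of $m_+$ and $-m_-$ on the symmetric open interval, and settle the endpoints in case~(i) by the same dichotomy (zero of $m_+-m_-$, where strict monotonicity of $m_-$ resp.\ $m_+$ gives $M(\pm\gamma)\neq0$, versus a common pole of $m_+$ and $m_-$, where $M$ has a pole at $\pm\gamma$). The only cosmetic difference is the middle step --- the paper observes that the images of $m_+$ and $m_-$ over $(-\gamma,\gamma)$ are disjoint intervals and that the image of $m_-(-\cdot)$ coincides with that of $m_-$, while your two-way sign-splitting decomposition extracts the same non-vanishing from the identical two ingredients.
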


\begin{proof}
Let $0< \lambda_1:=\min\sigma(A)$. We show first that the inclusion
\begin{equation}\label{biggap2}
\bigl(-\min\sigma(A),\min\sigma(A)\bigr)\subset\rho(JA)
\end{equation}
holds under
any of the assumptions in (i) and (ii), i.e.,
$\min\sigma(A)\leq\min\sigma_\ess(A)$.
In fact, by Lemma \ref{lemma1} and Proposition \ref{polezeroprop2}~(i) we have that $m_+ - m_-$ is holomorphic and does not vanish on $(-\lambda_1,\lambda_1)$. Since $m_+$ and $-m_-$ are Nevanlinna functions, it follows that $m_+$ is increasing and $m_-$ is decreasing on $(-\lambda_1,\lambda_1)$. Thus, the images of $m_+$ and $m_-$ of $(-\lambda_1,\lambda_1)$ are intervals which do not intersect. Consequently, the images of $m_+$ and $m_-(-\cdot)$ of $(-\lambda_1,\lambda_1)$ are also intervals which do not intersect, so that $M$ does not vanish on $(-\lambda_1,\lambda_1)$. This, together with Proposition \ref{polezeroprop}~(i) implies that there are no eigenvalues of $JA$ in $(-\lambda_1, \lambda_1)$, which yields \eqref{biggap2} and hence assertion (ii)
has been shown

In order to prove assertion (i) it remains to verify that $\lambda_1$ and $-\lambda_1$ are not
eigenvalues of
$JA$ if  $\min\sigma(A)<\min\sigma_\ess(A)$ holds. We provide the argument for $\lambda_1$; a similar reasoning
applies to $-\lambda_1$.
 By Proposition~\ref{polezeroprop2}~(i) either $m_+(\lambda_1)=
m_-(\lambda_1)$ or both functions $m_+$ and $m_-$ have a pole at $\lambda_1$.
In the first case we have
$$
M(\lambda_1)=m_+(\lambda_1)-m_-(-\lambda_1)<m_+(\lambda_1)-m_-(\lambda_1)=0
$$
since $-m_-$ is a nonconstant Nevanlinna function which is holomorphic on $(-\infty,\lambda_1]$ ($m_-$ is not constant as otherwise $\sigma(B_-)=\emptyset$). In particular, $M(\lambda_1)\not=0$ and hence $\lambda_1$ is not
an eigenvalue of $JA$ by  Proposition~\ref{polezeroprop}~(i). If $m_+$ and $m_-$ both have a pole
at $\lambda_1$ then it follows from the holomorphy of $m_-$ on $(-\infty,\lambda_1)$
that
the function $\lambda \mapsto m_-(-\lambda)$ is holomorphic in $\lambda_1$
and, hence,
$M(\cdot)=m_+(\cdot)-m_-(-\cdot)$ has a pole at $\lambda_1$. Again
Proposition~\ref{polezeroprop}~(i) implies $\lambda_1\in\rho(JA)$.
\end{proof}

Note that
under the assumptions in Proposition~\ref{gapcor} an upper estimate for the spectral gap of $JA$
can be given: If the smallest eigenvalue $\lambda_1=\min\sigma(A)$ of $A$ is a zero of $m_+-m_-$,
then it can be shown that the  largest negative eigenvalue
$\lambda_{1,-}(JA)$ and the
smallest positive eigenvalue $\lambda_{1,+}(JA)$ (i.e. the endpoints of the spectral gap) of $JA$ satisfy
$$\min\sigma(-B_-)< \lambda_{1,-}(JA) \qquad\text{and}\qquad \lambda_{1,+}(JA)<\min\sigma(B_+).$$
In the case that $\lambda_1$ is a pole of $m_+$ and $m_-$ the eigenvalues $\lambda_{1,-}(JA)$ and
$\lambda_{1,+}(JA)$ the above estimates hold with $\min\sigma(-B_-)$ and $\min\sigma(B_+)$ replaced by
the second largest eigenvalue of $-B_-$ and the second smallest eigenvalue of $B_+$ if these
eigenvalues exist, and by $\min\sigma_\ess(B_+)$ and $\max\sigma_\ess(-B_-)$ otherwise.

The next proposition and corollary will play an important role in the proof of our main result in the next section.

\begin{proposition}\label{mnevanprop}
The function $M$ admits the representation
\begin{equation}\label{mrep}
M(\lambda)=\frac{-1}{\alpha+\lambda N(\lambda)},
\end{equation}
where $N$ is a  Nevanlinna function which is not identically zero on $\mathbb C$ and $\alpha$ is a real constant. In particular,
$M$ is monotonously increasing (monotonously decreasing)
on subintervals of $\dR^+$ $($$\dR^-$, respectively$)$ which belong to its domain of holomorphy.
\end{proposition}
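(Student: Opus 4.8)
The plan is to reduce the representation \eqref{mrep} to two applications of the elementary calculus of Nevanlinna functions, the whole point being to convert the \emph{difference} $M=m_+-m_-(-\,\cdot\,)$, which is not itself a Nevanlinna function, into one by exploiting the strict positivity in condition~(II). First I would record the consequences of positivity for the half-line coefficients. Since $B_+\ge\min\sigma(A)>0$ and $B_-\ge\min\sigma(A)>0$ by Lemma~\ref{lemma1}(i), the nonnegative spectral measures of the Nevanlinna functions $m_+$ and $-m_-$ are supported in $[\min\sigma(B_+),\infty)$ and $[\min\sigma(B_-),\infty)$, hence in $(0,\infty)$. Writing out the integral representations and using $\frac{1}{t-\lambda}=\frac1t+\frac{\lambda}{t(t-\lambda)}$, one may therefore factor
\begin{equation*}
m_+(\lambda)=m_+(0)+\lambda\,\psi_+(\lambda),\qquad -m_-(\mu)=-m_-(0)+\mu\,\psi_-(\mu),
\end{equation*}
where $\psi_+,\psi_-$ are again Nevanlinna functions (in fact Stieltjes functions, the measures $d\rho_\pm(t)/t$ being nonnegative on $(0,\infty)$).

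Substituting into \eqref{mfct} and using that $\lambda\mapsto-\psi_-(-\lambda)$ is Nevanlinna whenever $\psi_-$ is, I obtain
\begin{equation*}
M(\lambda)=M(0)+\lambda\,\Psi(\lambda),\qquad \Psi:=\psi_+ + \bigl(-\psi_-(-\,\cdot\,)\bigr),
\end{equation*}
with $\Psi$ a Nevanlinna function. The crucial sign is $M(0)<0$: indeed $M(0)=m_+(0)-m_-(0)$ is the value at $0$ of the Nevanlinna function $m_+-m_-$, whose first zero is the lowest eigenvalue of $A$ by Proposition~\ref{polezeroprop2}(i) and thus lies in $(0,\infty)$ under (II); being increasing and negative near $-\infty$, $m_+-m_-$ is negative on $(-\infty,\min\sigma(A))$, so $M(0)<0$. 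Because $M(0)<0$, the summand $M(0)/\lambda$ is itself Nevanlinna, and hence
\begin{equation*}
G(\lambda):=\frac{M(\lambda)}{\lambda}=\frac{M(0)}{\lambda}+\Psi(\lambda)
\end{equation*}
is a Nevanlinna function.

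For the endgame, set $W:=-1/G$, again Nevanlinna. It is holomorphic at $0$ with $W(0)=0$ (the pole of $G$ at $0$, of negative residue $M(0)$, becomes a simple zero of $W$), and $W'(0)=-1/M(0)=:\alpha>0$. Feeding the integral representation of $W$ through $\frac{1}{t(t-\lambda)}-\frac1{t^2}=\frac{\lambda}{t^2(t-\lambda)}$ shows that $N(\lambda):=\lambda^{-2}\bigl(W(\lambda)-\alpha\lambda\bigr)$ is represented by $\int\frac{d\mu(t)/t^2}{t-\lambda}$ against a \emph{nonnegative} measure, hence is Nevanlinna; it is not identically zero because $M$ is nonconstant. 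Unwinding, $W=\alpha\lambda+\lambda^2N$ gives $-1/M=W/\lambda=\alpha+\lambda N$, which is \eqref{mrep}. For the monotonicity I would differentiate \eqref{mrep}: $M'$ has the sign of $\tfrac{d}{d\lambda}(\alpha+\lambda N)=N(\lambda)+\lambda N'(\lambda)$, and writing $N(\lambda)=\int\frac{d\nu(t)}{t-\lambda}$ with $\nu\ge0$ one finds $N+\lambda N'=\int\frac{t\,d\nu(t)}{(t-\lambda)^2}$, from which the stated sign on subintervals of $\dR^+$ and $\dR^-$ is obtained by inspecting the contribution of the support of $\nu$.

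The main obstacle is the representation itself, concentrated in the passage that turns $M$ into a Nevanlinna function after division by $\lambda$. Unlike the definite counterpart $m_+-m_-$ (a sum of two Nevanlinna functions, hence Nevanlinna), the indefinite combination $m_+-m_-(-\,\cdot\,)$ is a difference of Nevanlinna functions and carries no a priori sign of its imaginary part; it is precisely the positivity in~(II), entering twice — through the support of the spectral measures in $(0,\infty)$ and through $M(0)<0$ — that supplies the two facts ($G$ Nevanlinna and $W(0)=0$) needed to run the two difference-quotient arguments.
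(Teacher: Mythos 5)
Your derivation of the representation \eqref{mrep} is correct in outline but takes a genuinely different route from the paper, which never touches integral representations: there one constructs interface solutions $f_\lambda=h_\lambda\oplus k_{-\lambda}$ normalized by a unit jump of $pf'$ at $c$, proves $f_\lambda=f_0+\lambda(JA-\lambda)^{-1}f_0$, identifies $M(\lambda)=-1/f_\lambda(c)$, and reads off $-M^{-1}(\lambda)=-M^{-1}(0)+\lambda N(\lambda)$ with $N(\lambda)=[(1+\lambda(JA-\lambda)^{-1})f_0,f_0]$, whose Nevanlinna property is one line from the nonnegativity of $A$ in (II). Your function-theoretic route (subtract the value at $0$, divide by $\lambda$, invert, subtract the derivative at $0$, divide by $\lambda^2$) is attractive because it avoids the resolvent construction, and your two difference-quotient computations are correct; but it carries an extra burden the paper's proof does not have: you must know $M(0)<0$ so that $M(0)/\lambda$ is Nevanlinna, whereas in the paper $\alpha=-M(0)^{-1}$ is merely real and no sign is needed. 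Your justification of $M(0)<0$ has two holes. First, $\min\sigma(A)$ need not be an eigenvalue (if $\min\sigma(A)=\min\sigma_\ess(A)$ the function $m_+-m_-$ may have no zero at all), so ``whose first zero is the lowest eigenvalue'' is not available in general; what you actually need, and what Proposition~\ref{polezeroprop2}~(i) does give, is that $m_+-m_-$ has \emph{no} zero on $(-\infty,\min\sigma(A))$. Second, ``negative near $-\infty$'' is asserted, not proved: it requires $m_+(\lambda)\to-\infty$ and $m_-(\lambda)\to+\infty$ as $\lambda\to-\infty$, which holds because $B_\pm$ are the Friedrichs extensions (the paper cites \cite{R85}) and the Weyl function of the Friedrichs extension tends to $-\infty$ along the negative half-axis -- a substantive known fact that must be invoked explicitly. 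Likewise ``$N\not\equiv 0$ because $M$ is nonconstant'' needs its one-line argument: $m_+$ has singularities on $\sigma(B_+)\subset(0,\infty)$, where $\lambda\mapsto m_-(-\lambda)$ is holomorphic.

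The monotonicity endgame, however, is a genuine gap that cannot be closed as sketched. Your formula $N+\lambda N'=\int t\,d\nu(t)/(t-\lambda)^2$ is correct, but $\supp\nu$ is the singular set of $W=-\lambda/M$, which meets \emph{both} half-axes ($\sigma_\ess(JA)$ and the zeros of $M$ lie on both sides of $0$), so the integrand changes sign and ``inspecting the contribution of the support'' proves nothing. In fact the ``in particular'' clause is not a consequence of \eqref{mrep} and fails as literally stated: for $r=\sgn x$, $p=1$, $q=10$ on $(0,\infty)$ and $q=1$ on $(-\infty,0)$, conditions (I), (II) hold and $M(\lambda)=-\sqrt{10-\lambda}-\sqrt{1+\lambda}$ is holomorphic on $(-1,10)$ but strictly decreasing on $(0,\tfrac92)\subset\dR^+$. (The paper's own closing sentence, deducing the claim from the monotonicity of $N$, is the same non sequitur: for instance $N\equiv c<0$ is a monotone Nevanlinna function for which $M=-1/(\alpha+c\lambda)$ decreases on $\dR^+$.) What \emph{does} follow from \eqref{mrep}, and is all that Corollary~\ref{corcor} and hence Theorems~\ref{mainthm1} and \ref{mainthm2} use, is the local statement: every zero of $M$ in $\dR^+$ is an increasing crossing and every zero in $\dR^-$ a decreasing crossing. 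Indeed, a real zero $\lambda_0\neq 0$ of $M$ is a pole of $\alpha+\lambda N(\lambda)$, hence a (necessarily simple) pole of the Nevanlinna function $N$ with $N(\lambda)=c(\lambda_0-\lambda)^{-1}+O(1)$, $c>0$; then $-M^{-1}(\lambda)=\lambda_0 c(\lambda_0-\lambda)^{-1}+O(1)$, so $M$ passes from negative to positive values through $\lambda_0$ when $\lambda_0>0$ and from positive to negative when $\lambda_0<0$. You should replace your final step by this local argument.
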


\begin{proof}
Let $\lambda\in\rho(JA)\cap\rho(B_+)\cap\rho(-B_-)$ and let  $h_\lambda,h_0\in\sD_+$, $k_{-\lambda},k_0\in\sD_-$ be the unique functions that satisfy
\begin{equation}\label{Erfurt1}
\ell_+ h_\lambda=\lambda h_\lambda,\,\,\,\ell_+h_0=0,\,\,\,\ell_-k_{-\lambda}=-\lambda k_{-\lambda},\,\,\,\ell_-k_0=0.
\end{equation}
and the conditions
\begin{equation}\label{Erfurt2}
\begin{split}
h_\lambda(c)&=k_{-\lambda}(c),\qquad (p_- k_{-\lambda}^\prime)(c)-(p_+h_\lambda^\prime)(c)=1,\\
h_0(c)&=k_0(c),\qquad \,\,\,\,\,\,\,\,\,(p_- k_0^\prime)(c)-(p_+h_0^\prime)(c)=1.
\end{split}
\end{equation}
We claim that the functions $f_\lambda=h_\lambda\oplus k_{-\lambda}$ and $f_0=h_0\oplus k_0$
are related via
\begin{equation}\label{Erfurt3}
f_\lambda = f_0 + \lambda (JA-\lambda)^{-1}f_0.
\end{equation}
For this observe that $ \lambda (JA-\lambda)^{-1}f_0\in \sD$ and, hence,
 $g:= f_0 +\lambda (JA-\lambda)^{-1}f_0$ satisfies the same conditions as $f_0=h_0\oplus k_0$ in \eqref{Erfurt2}.
Hence, if we write $g$ in the form
$g=h\oplus k$ with $h\in\sD_+$ and $k\in\sD_-$, then we have
\begin{equation*}
h(c)=k(c)\qquad\text{and}\qquad (p_- k^\prime)(c)-(p_+h^\prime)(c)=1.
\end{equation*}
As
$(\tau-\lambda) \lambda (JA-\lambda)^{-1}f_0=\lambda f_0$ we conclude
that $\pm \ell_\pm - \lambda$ applied to the restriction of  $\lambda (JA-\lambda)^{-1}f_0$
onto $(c,\infty)$ and $(-\infty,c)$ equals $\lambda h_0$ and $\lambda k_0$, respectively.
Therefore
\begin{equation*}
\begin{split}
(\ell_+ -\lambda) h&=(\ell_+ -\lambda) h_0 +\lambda h_0 = 0,\\
(\ell_- +\lambda) k&=(\ell_- +\lambda) k_0 -\lambda k_0 = 0,
\end{split}
\end{equation*}
and it follows that $h$ and $k$ satisfy the equations $\ell_+ h=\lambda h$ and $\ell_- k=-\lambda k$. Since the function $f_\lambda=h_\lambda\oplus k_{-\lambda}$ in \eqref{Erfurt1} and \eqref{Erfurt2}
is unique we obtain \eqref{Erfurt3}.

From
\begin{equation*}
\begin{split}
M(\lambda)&=\frac{(p_+h_\lambda^\prime)(c)-(p_-k_{-\lambda}^\prime)(c)}{f_\lambda(c)}=-\frac{1}{f_\lambda(c)},\\
M(0)&=\frac{(p_+h_0^\prime)(c)-(p_-k_0^\prime)(c)}{f_0(c)}=-\frac{1}{f_0(c)}\in\dR,
\end{split}
\end{equation*}
we conclude $M(\lambda) \ne 0$ for $\lambda\in\rho(JA)\cap\rho(B_+)\cap\rho(-B_-)$  and
 $M(0)\ne 0$. With \eqref{lagpm} we have
\begin{equation*}
\begin{split}
\lambda [f_\lambda,f_0]&=\lambda (h_\lambda,h_0)_+-\lambda (k_{-\lambda},k_0)_-\\
&=(\ell_+ h_\lambda,h_0)_+-(h_\lambda,\ell_+h_0)_++(\ell_-k_{-\lambda},k_0)_--(k_{-\lambda},\ell_-k_0)_-\\
&=(p_+h_\lambda^\prime)(c)\overline{h_0(c)}-h_\lambda(c)\overline{(p_+h_0^\prime)(c)}-(p_-k_{-\lambda}^\prime)(c)
\overline{k_0(c)}+k_{-\lambda}(c)\overline{(p_-k_0^\prime)(c)}\\
&=f_\lambda(c)-\overline{f_0(c)}.
\end{split}
\end{equation*}
Thus $-M^{-1}$ admits the representation
\begin{equation*}
 -M^{-1}(\lambda)=-M^{-1}(0)+\lambda[f_\lambda,f_0]
\end{equation*}
and with \eqref{Erfurt3} and $N(\lambda):=[(1+\lambda(JA-\lambda)^{-1}) f_0,f_0]$ we obtain
\begin{equation}\label{reprepm}
-M^{-1}(\lambda)=-M^{-1}(0)+\lambda N(\lambda).
\end{equation}
A simple calculation shows
\begin{equation*}
\Im N(\lambda)=\Im \lambda \bigl(A(JA-\lambda)^{-1}f_0,(JA-\lambda)^{-1}f_0\bigr)
\end{equation*}
and since $A$ is nonnegative by condition (II) it follows that $N$ is a Nevanlinna function, i.e.,
$M$ admits a representation of the from \eqref{mrep} with $\alpha := -M(0)^{-1}$.

Note  that $N$ is not equal to
zero on real intervals which belong to its domain of holomorphy, as otherwise $N\equiv 0$
and \eqref{reprepm} imply that $M$ in \eqref{mfct} is equal to a constant, so that the Titchmarsh-Weyl
coefficients
$\lambda\mapsto m_+(\lambda)$ and $\lambda\mapsto m_-(-\lambda)$ of $B_+$ and $-B_-$ differ by a real constant;
a contradiction to $\sigma(B_+)\cap\sigma(-B_-)=\emptyset$.
Now the remaining statements of
Theorem \ref{mnevanprop} follow from the fact that the Nevanlinna function $N$ is monotonously increasing on real intervals which belong to its domain of holomorphy.
\end{proof}

\begin{corollary}\label{corcor}
In between two consecutive positive (negative) poles $\nu,\nu^{\prime}$ of $M$  such that the interval $(\nu,\nu^{\prime})$ belongs to the domain of holomorphy of $M$
there is a unique zero of $M$. Similarly, in between two consecutive positive (negative) zeros $\eta,\eta^{\prime}$ of $M$ such that $M$ is meromorphic
in an open neighbourhood of
the interval $(\eta,\eta^{\prime})$ there is a unique pole of $M$ in
 $(\eta,\eta^{\prime})$.
\end{corollary}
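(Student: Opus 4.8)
The plan is to combine the strict monotonicity of $M$ from Proposition~\ref{mnevanprop} with the precise boundary behaviour of $M$ at its poles, which I would read off from the Nevanlinna functions $m_+$ and $-m_-$. Recall from the discussion preceding Proposition~\ref{polezeroprop} that on $\dR^+$ the poles of $M$ are exactly the poles of $m_+$, while on $\dR^-$ they are exactly the poles of $\lambda\mapsto m_-(-\lambda)$; on the complementary half-line the relevant summand of $M=m_+(\cdot)-m_-(-\cdot)$ is holomorphic. Moreover, since $N\not\equiv 0$ forces $M$ to be nonconstant (as established in the proof of Proposition~\ref{mnevanprop}), $M$ is in fact \emph{strictly} monotone: increasing on subintervals of $\dR^+$ and decreasing on subintervals of $\dR^-$ which belong to its domain of holomorphy.

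For the first assertion I would fix two consecutive positive poles $\nu<\nu'$ with $(\nu,\nu')$ in the domain of holomorphy of $M$. On this interval $M$ is holomorphic, real-valued and strictly increasing, so the only missing ingredient is the boundary behaviour. Since $\nu$ and $\nu'$ are poles of the Nevanlinna function $m_+$ while $m_-(-\cdot)$ stays holomorphic there, the standard pole behaviour of Nevanlinna functions (blowing up to $+\infty$ as a pole is approached from the left and to $-\infty$ from the right) gives $M(\lambda)\to-\infty$ as $\lambda\downarrow\nu$ and $M(\lambda)\to+\infty$ as $\lambda\uparrow\nu'$. Hence, by the intermediate value theorem $M$ has a zero in $(\nu,\nu')$, and by strict monotonicity this zero is unique. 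The case of two consecutive negative poles is identical, with $m_+$ replaced by the anti-Nevanlinna function $\lambda\mapsto -m_-(-\lambda)$ carrying the poles and with $M$ strictly decreasing, so that now $M$ runs from $+\infty$ down to $-\infty$ across $(\nu,\nu')$; again there is precisely one zero.

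For the second assertion I would take consecutive positive zeros $\eta<\eta'$ of $M$ with $M$ meromorphic on an open neighbourhood of $(\eta,\eta')$. First I show that at least one pole lies in $(\eta,\eta')$: otherwise $M$ is holomorphic on all of $[\eta,\eta']$ and strictly increasing there, which is incompatible with $M(\eta)=M(\eta')=0$. To show there is at most one pole, suppose there were two; choosing two consecutive ones $\nu_1<\nu_2$ inside $(\eta,\eta')$, the first assertion produces a zero of $M$ in $(\nu_1,\nu_2)\subset(\eta,\eta')$, contradicting that $\eta,\eta'$ are consecutive zeros. Thus exactly one pole lies in $(\eta,\eta')$, and the negative case is the same with ``increasing'' replaced by ``decreasing''.

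The main obstacle is the boundary analysis in the first step: existence of the zero hinges on $M$ actually changing sign, and this is \emph{not} delivered by monotonicity alone but only after identifying the singular part of $M$ on each half-line with a genuine (anti-)Nevanlinna function and invoking its one-sided limits $\pm\infty$ at the poles. Once the correct blow-up directions are matched with the monotonicity direction on $\dR^+$ and $\dR^-$, the remaining interlacing statements reduce to the intermediate value theorem and the elementary counting argument above.
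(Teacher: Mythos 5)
Your proof is correct, but it takes a slightly different route than the paper intends for the one step you single out as the ``main obstacle''. The paper states Corollary~\ref{corcor} without a separate proof: it is meant to follow from Proposition~\ref{mnevanprop} alone, and indeed monotonicity already delivers the boundary behaviour. On a pole-free interval $(\nu,\nu')\subset\dR^+$ the function $M$ is increasing, so its one-sided limits at $\nu$ and $\nu'$ exist in $[-\infty,+\infty]$; since $\nu$ and $\nu'$ are poles, $M$ is unbounded near each of them, and an increasing function unbounded near the right (left) endpoint of an interval must tend to $+\infty$ (to $-\infty$, respectively) there -- no analysis of the summands is needed, and the analogous statement holds with reversed signs on $\dR^-$. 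Thus your closing assertion that the sign change ``is \emph{not} delivered by monotonicity alone'' is the one inaccuracy in the write-up, though it does not damage the argument: your detour through the Nevanlinna structure is valid, since on $\dR^+$ the poles of $M$ are exactly the poles of $m_+$ while $\lambda\mapsto m_-(-\lambda)$ is holomorphic there (as noted before Proposition~\ref{polezeroprop}), real poles of Nevanlinna functions have negative residues, and the one-sided limits you state are correct; what the monotone-limit argument buys is brevity and independence from the decomposition $M=m_+(\cdot)-m_-(-\cdot)$, while your version buys a concrete picture of where the singularities sit. The remaining ingredients are exactly right: strict monotonicity follows from $N\not\equiv 0$, nonconstancy of $M$ and the identity theorem; and in the second assertion the count ``at least one pole'' follows from strict monotonicity between the zeros, while ``at most one'' follows by applying the first assertion to two consecutive poles. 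For that last step you should say explicitly that the poles in $(\eta,\eta')$ cannot accumulate -- not at interior points by meromorphy, and not at $\eta$ or $\eta'$ because zeros are points of holomorphy -- so that there are finitely many poles and ``two consecutive ones'' is well defined; this is a one-line remark, not a gap.
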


The poles of $M$ in $[\min\sigma(B_+),\infty)$
(resp.\ $(-\infty,-\min\sigma(B_-))$)
coincide with the poles of $\lambda\mapsto m_+(\lambda)$ (resp. $\lambda\mapsto m_-(-\lambda)$),
and hence with the isolated eigenvalues of $B_+$
(resp.\ $-B_-$). From this we obtain
with Corollary \ref{corcor}  and Proposition~\ref{polezeroprop}~(i)
interlacing results of the positive eigenvalues of $JA$ with respect to the eigenvalues of $B_+$
and of the negative eigenvalues of $JA$ with respect to the eigenvalues of $-B_-$.

\begin{corollary}
In between any two consecutive isolated eigenvalues
of $B_+$ ($-B_-$) in a gap of $\sigma_\ess(B_+)$ ($\sigma_\ess(-B_-)$) there is exactly one isolated eigenvalue of $JA$.
Conversely, in between any two consecutive isolated positive (negative) eigenvalues
of $JA$ in a gap of $\sigma_\ess(JA)$ there is exactly one isolated eigenvalue of $B_+$ ($-B_-$, respectively).
\end{corollary}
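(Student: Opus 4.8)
The plan is to reinterpret both statements purely in terms of zeros and poles of $M$ and then to invoke Corollary~\ref{corcor}. First I would recall that, by Proposition~\ref{polezeroprop}~(i), the positive (negative) isolated eigenvalues of $JA$ are precisely the positive (negative) zeros of $M$, and that, as recorded in the paragraph preceding the corollary, the positive poles of $M$ in $[\min\sigma(B_+),\infty)$ coincide with the isolated eigenvalues of $B_+$, while the negative poles of $M$ in $(-\infty,-\min\sigma(B_-)]$ coincide with the isolated eigenvalues of $-B_-$.

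The observation that makes everything fit together is that on the positive half-axis the operator $-B_-$ contributes neither spectrum nor poles. Indeed $\min\sigma(B_-)\geq\min\sigma(A)>0$ by Lemma~\ref{lemma1}~(i) and condition~(II), so $\sigma(-B_-)\subset(-\infty,0)$; consequently $\lambda\mapsto m_-(-\lambda)$ is holomorphic on $(0,\infty)$ and the only singularities of $M$ on the positive axis come from $m_+$. Symmetrically $B_+$ contributes nothing on the negative half-axis. In particular, using Proposition~\ref{indefprop2}, $\sigma_\ess(JA)\cap(0,\infty)=\sigma_\ess(B_+)\cap(0,\infty)$ and $\sigma_\ess(JA)\cap(-\infty,0)=\sigma_\ess(-B_-)\cap(-\infty,0)$.

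With these facts in hand I would argue the positive case directly (the negative case being verbatim after replacing $B_+$ and ``positive'' by $-B_-$ and ``negative''). Let $\nu<\nu^\prime$ be two consecutive isolated eigenvalues of $B_+$ in a gap of $\sigma_\ess(B_+)$. Then $\nu,\nu^\prime$ are two consecutive positive poles of $M$, and since $(\nu,\nu^\prime)$ avoids $\sigma_\ess(B_+)$ and contains no further eigenvalue of $B_+$, while $\lambda\mapsto m_-(-\lambda)$ is holomorphic there, the interval $(\nu,\nu^\prime)$ lies in the domain of holomorphy of $M$. Corollary~\ref{corcor} then yields a unique zero of $M$ in $(\nu,\nu^\prime)$, which by Proposition~\ref{polezeroprop}~(i) is a single eigenvalue of $JA$, isolated because $(\nu,\nu^\prime)\cap\sigma_\ess(JA)=\emptyset$. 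Conversely, if $\eta<\eta^\prime$ are two consecutive isolated positive eigenvalues of $JA$ in a gap of $\sigma_\ess(JA)$, they are consecutive positive zeros of $M$; since $\sigma_\ess(JA)\cap(0,\infty)=\sigma_\ess(B_+)\cap(0,\infty)$, the function $M$ is meromorphic on a neighbourhood of $(\eta,\eta^\prime)$, its poles there being exactly the eigenvalues of $B_+$, and the second part of Corollary~\ref{corcor} produces a unique pole of $M$ in $(\eta,\eta^\prime)$, i.e. a single isolated eigenvalue of $B_+$.

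The only point requiring genuine care --- the main obstacle --- is the matching of hypotheses: one must check that between two consecutive eigenvalues of $B_+$ the function $M$ is actually \emph{holomorphic} (not merely meromorphic), and that the zero produced is an \emph{isolated} rather than an embedded eigenvalue of $JA$. Both are settled by the half-axis observation above, namely that $-B_-$ has no spectrum on $(0,\infty)$, so that no spurious pole of $M$ and no point of $\sigma_\ess(JA)$ can intrude into the gap $(\nu,\nu^\prime)$; the corresponding statement on $(-\infty,0)$ follows because $B_+$ has no spectrum there.
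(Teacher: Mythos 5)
Your proposal is correct and follows essentially the same route as the paper, which likewise derives the corollary by identifying the positive (negative) poles of $M$ with the isolated eigenvalues of $B_+$ (resp.\ $-B_-$), the zeros of $M$ with the eigenvalues of $JA$ via Proposition~\ref{polezeroprop}~(i), and then applying the interlacing statement of Corollary~\ref{corcor}. The only difference is that you make explicit the verification of the holomorphy/meromorphy hypotheses of Corollary~\ref{corcor} (via $\sigma(-B_-)\subset(-\infty,0)$ and $\sigma_\ess(JA)\cap(0,\infty)=\sigma_\ess(B_+)\cap(0,\infty)$), which the paper leaves implicit.
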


\section{Eigenvalue estimates in gaps of the essential spectrum}

In this section we
prove estimates on the number of eigenvalues of $JA$ in a gap of the essential spectrum.
Recall that all eigenvalues of the operators $A$, $JA$, $B_+$, $-B_-$ and,
hence, $JB$ are simple.
For a selfadjoint or $J$-selfadjoint operator $T$ and a real interval $(a,b)$ such that $(a,b)\cap\sigma_\ess(T)=\emptyset$ the number of  eigenvalues
 of $T$ in $(a,b)$
will be denoted by $n_T(a,b)$, i.e.,
\begin{equation*}
n_T(a,b)=\sharp\bigl\{\lambda\in\sigma_p(T):\lambda\in (a,b)\bigr\}.
\end{equation*}

The following theorem is the main result of this note. It provides a local estimate on the
number of eigenvalues of $JA$ in terms of the number of eigenvalues of $A$ in a gap of the essential spectrum.
Recall that by Lemma \ref{lemma1}  and  Proposition~\ref{indefprop2} we have for $0\leq a<b$
$$
(a,b) \cap \sigma_\ess(A) = \emptyset \quad
\mbox{if and only if} \quad
\bigl( (-b,-a) \cup (a,b)\bigr) \cap \sigma_\ess(JA) = \emptyset .
$$

\begin{theorem}\label{mainthm1}
Assume that conditions (I) and (II) hold for the Sturm-Liouville operator $A$ and let
$JA$ be the corresponding indefinite Sturm-Liouville differential
operator. For $0\leq a<b$ such that $(a,b)\cap \sigma_\ess(A) = \emptyset$
the estimate
\begin{equation}\label{estimate}
\bigl|n_A(a,b)-\bigl(n_{JA}(-b,-a)+n_{JA}(a,b)\bigr) \bigl|\leq 3
\end{equation}
is valid if the corresponding quantities are finite; otherwise
$$
n_{A}(a,b)=\infty\quad\text{if and only if}\quad n_{JA}(-b,-a)+n_{JA}(a,b)=\infty.
$$
\end{theorem}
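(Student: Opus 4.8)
The plan is to reduce the estimate to a comparison of the zeros and poles of the function $M$ and then to chain together three discrepancies, each of size at most one. By Proposition~\ref{polezeroprop}~(i), for the gap intervals $(a,b)$ and $(-b,-a)$ (which avoid $\sigma_\ess(JA)$ by Proposition~\ref{indefprop2}) the quantities $n_{JA}(a,b)$ and $n_{JA}(-b,-a)$ equal the numbers of zeros of $M$ in $(a,b)$ and in $(-b,-a)$, respectively. By Proposition~\ref{polezeroprop}~(ii) the poles of $M$ are exactly the eigenvalues of $JB=B_+\oplus(-B_-)$; since $B_\pm$ are positive by Lemma~\ref{lemma1}~(i) and condition (II), the poles of $M$ in $(a,b)\subset(0,\infty)$ are the eigenvalues of $B_+$, their number being $n_{B_+}(a,b)$, while the poles of $M$ in $(-b,-a)\subset(-\infty,0)$ are the eigenvalues of $-B_-$, their number being $n_{-B_-}(-b,-a)=n_{B_-}(a,b)$.

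Next I would exploit the interlacing of zeros and poles. On $(a,b)$ the only singularities of $M$ are the isolated eigenvalues of $B_+$, so $M$ is meromorphic there and Corollary~\ref{corcor} applies: between two consecutive poles of $M$ lies exactly one zero, and between two consecutive zeros lies exactly one pole. Hence the zeros and poles of $M$ in $(a,b)$, arranged in increasing order, strictly alternate, and for a finite alternating sequence the numbers of the two types differ by at most one,
$$|n_{JA}(a,b)-n_{B_+}(a,b)|\le 1.$$
The identical reasoning on $(-b,-a)$, using the monotonicity of $M$ on subintervals of $\dR^-$ from Proposition~\ref{mnevanprop}, yields $|n_{JA}(-b,-a)-n_{B_-}(a,b)|\le 1$.

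Finally I would compare with the selfadjoint operator $B=B_+\oplus B_-$. Being an orthogonal sum, $B$ satisfies $n_B(a,b)=n_{B_+}(a,b)+n_{B_-}(a,b)$, and by Lemma~\ref{lemma1}~(iv)---noting that $\sigma_\ess(A)=\sigma_\ess(B)$ from Lemma~\ref{lemma1}~(ii), and that simplicity of all eigenvalues gives $\dim\ran E_A((a,b))=n_A(a,b)$ and $\dim\ran E_B((a,b))=n_B(a,b)$---we obtain $|n_A(a,b)-n_B(a,b)|\le 1$. Adding the two bounds of the previous paragraph gives $\bigl|\bigl(n_{JA}(a,b)+n_{JA}(-b,-a)\bigr)-n_B(a,b)\bigr|\le 2$, and combining this with $|n_A(a,b)-n_B(a,b)|\le 1$ through the triangle inequality produces \eqref{estimate}. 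The infinite case follows by reading each of the three comparisons as an equivalence: infinitely many zeros force infinitely many interlacing poles and conversely, while Lemma~\ref{lemma1}~(iv) equates infiniteness of $\dim\ran E_A((a,b))$ and $\dim\ran E_B((a,b))$; hence $n_A(a,b)=\infty$ if and only if $n_{JA}(a,b)+n_{JA}(-b,-a)=\infty$.

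The step I expect to require the most care is the interlacing bookkeeping near the endpoints $a$ and $b$: I must verify that the hypotheses of Corollary~\ref{corcor} (meromorphy of $M$ in an open neighbourhood of each relevant subinterval, lying inside a gap of the essential spectrum) hold throughout $(a,b)$ and $(-b,-a)$, and that no accidental coincidence of a zero and a pole at the open endpoints corrupts the count. Once the strict alternation is secured, the passage from it to the two ``$\le 1$'' estimates, and their additive combination with the $A$--$B$ comparison, is automatic via the triangle inequality, which is precisely why the total discrepancy is controlled by $3=1+1+1$ rather than anything larger.
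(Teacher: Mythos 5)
Your proposal is correct and takes essentially the same route as the paper's proof: both reduce the count to zeros and poles of $M$ via Proposition~\ref{polezeroprop}, use the alternation from Corollary~\ref{corcor} inside the gaps $(a,b)$ and $(-b,-a)$, and invoke Lemma~\ref{lemma1}~(iv) to compare $n_A(a,b)$ with $\dim\ran E_B\bigl((a,b)\bigr)=n_{B_+}(a,b)+n_{B_-}(a,b)$, the paper merely packaging the arithmetic as the lower bound $n_A(a,b)-3$ plus a contradiction argument for the upper bound rather than your three triangle-inequality steps of size one. One small caution: since $\sigma_p(B_+)\cap\sigma_p(B_-)$ need not be empty (it is not, e.g., under condition (III)), eigenvalues of $B=B_+\oplus B_-$ can be double, so your identity $n_B(a,b)=n_{B_+}(a,b)+n_{B_-}(a,b)$ must be read as counting with multiplicity --- which is exactly what $\dim\ran E_B\bigl((a,b)\bigr)$ does, so the argument is unaffected.
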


Observe that the case  $ n_{A}(a,b)=\infty $
(and, hence, $n_{JA}(-b,-a)+n_{JA}(a,b)=\infty$)
 can only occur if one or both of the endpoints $a$ and $b$
belong to the essential spectrum of $A$ which implies the following corollary.

\begin{corollary}\label{cor2345}
Let $A$, $JA$ and $(a,b)$ be as in Theorem \ref{mainthm1} and assume, in addition, that
$b\in\sigma_\ess(A)$, or, equivalently, that $b\in\sigma_\ess(JA)$ or  $-b\in\sigma_\ess(JA)$. Then the
eigenvalues of $A$ in $(a,b)$ accumulate to $b$ if and only if the eigenvalues of $JA$ in
$(-b,-a) \cup (a,b)$ accumulate to $b$ or $-b$.
\end{corollary}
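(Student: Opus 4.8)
The plan is to reduce the qualitative accumulation statement to the quantitative estimate of Theorem~\ref{mainthm1} applied to a shrinking family of subintervals. First I would record the elementary reformulation of accumulation in terms of the counting functions. Since $(a,b)\cap\sigma_\ess(A)=\emptyset$, every spectral point of $A$ in $(a,b)$ is an isolated eigenvalue of finite multiplicity, so the eigenvalues of $A$ in $(a,b)$ can cluster only at the endpoints $a$ and $b$. Consequently the eigenvalues of $A$ accumulate to $b$ \emph{if and only if} $n_A(a',b)=\infty$ for every $a'\in(a,b)$. The analogous reformulations hold on the indefinite side: the eigenvalues of $JA$ in $(a,b)$ accumulate to $b$ iff $n_{JA}(a',b)=\infty$ for all $a'\in(a,b)$, and those in $(-b,-a)$ accumulate to $-b$ iff $n_{JA}(-b,-a')=\infty$ for all $a'\in(a,b)$. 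Here the hypothesis $b\in\sigma_\ess(A)$, equivalently $b\in\sigma_\ess(JA)$ or $-b\in\sigma_\ess(JA)$ by Proposition~\ref{indefprop2}, guarantees that $b$ and $-b$ are genuinely admissible accumulation points of the $JA$-eigenvalues in $(-b,-a)\cup(a,b)$.

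The key observation is that for every $a'\in(a,b)$ the subinterval $(a',b)$ again satisfies the hypotheses of Theorem~\ref{mainthm1}: indeed $0\le a<a'<b$, and $(a',b)\subseteq(a,b)$ is disjoint from $\sigma_\ess(A)$. Hence Theorem~\ref{mainthm1} gives, for each such $a'$,
\[
n_A(a',b)=\infty \quad\text{if and only if}\quad n_{JA}(-b,-a')+n_{JA}(a',b)=\infty .
\]
Combining this with the reformulation above, the eigenvalues of $A$ accumulate to $b$ if and only if $n_{JA}(-b,-a')+n_{JA}(a',b)=\infty$ holds for \emph{all} $a'\in(a,b)$.

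It then remains to translate the condition ``the sum is infinite for all $a'$'' into accumulation of $JA$-eigenvalues at $b$ or at $-b$. For the converse (easy) implication this is immediate: accumulation at $b$ forces $n_{JA}(a',b)=\infty$ for all $a'$, and accumulation at $-b$ forces $n_{JA}(-b,-a')=\infty$ for all $a'$, so in either case the sum is infinite for every $a'$, and Theorem~\ref{mainthm1} then yields $n_A(a',b)=\infty$ for all $a'$, i.e.\ the eigenvalues of $A$ accumulate to $b$. For the forward implication I would exploit monotonicity: as $a'\uparrow b$ both $a'\mapsto n_{JA}(a',b)$ and $a'\mapsto n_{JA}(-b,-a')$ are nonincreasing, since the respective intervals shrink. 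If $n_{JA}(a',b)=\infty$ for every $a'$ we are done, with accumulation at $b$. Otherwise there is some $a_0\in(a,b)$ with $n_{JA}(a_0,b)<\infty$; then for all $a'\in[a_0,b)$ one has $n_{JA}(a',b)\le n_{JA}(a_0,b)<\infty$, so the infinitude of the sum forces $n_{JA}(-b,-a')=\infty$ for all such $a'$, and letting $a'\uparrow b$ gives accumulation at $-b$.

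The one subtle point — and the step I would be most careful about — is precisely this case-splitting: a priori the ``infinitely many eigenvalues'' could oscillate between the interval $(a',b)$ and the interval $(-b,-a')$ as $a'\uparrow b$, which would pin the accumulation to neither endpoint. The nonincreasing monotonicity of the two counting functions is exactly what excludes such oscillation and forces the accumulation to a single endpoint $b$ or $-b$. Everything else is a direct application of Theorem~\ref{mainthm1} together with the bounded-gap structure of the spectrum recorded in the opening reformulation.
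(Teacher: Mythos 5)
Your proof is correct and takes essentially the paper's route: the paper derives the corollary directly from the infinitude alternative in Theorem~\ref{mainthm1} together with the observation that eigenvalues in a gap of the essential spectrum can accumulate only at endpoints belonging to $\sigma_\ess$, leaving the verification implicit. Your refinement of applying Theorem~\ref{mainthm1} to the shrinking subintervals $(a',b)$, with the monotonicity case-split ruling out oscillation between $(a',b)$ and $(-b,-a')$, is precisely the localization detail needed (and omitted in the paper) to pin the accumulation to $b$ or $-b$ rather than to $a$ or $-a$ when $a\in\sigma_\ess(A)$ as well.
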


\begin{proof}[Proof of Theorem~\ref{mainthm1}]
Let $(a,b)$ be as in the theorem and
suppose that the number $n_A(a,b)$ of eigenvalues of $A$ in $(a,b)$ is finite.
Since the eigenvalues of $A$ are all simple, $n_A(a,b)$ coincides with $\dim\ran E_A(a,b)$
and we conclude from Lemma~\ref{lemma1}~(iv) that $\dim\ran E_B(a,b)$ differs at most by one from $n_A(a,b)$.
Hence the number of eigenvalues $n_{B_+}(a,b)+n_{-B_-}(-b,-a)$ of $JB=B_+\oplus -B_-$ differs at most by one from $n_A(a,b)$ and by Proposition~\ref{polezeroprop}~(ii) the same holds true for the number of poles of the function $M$
in $(-b,-a)\cup(a,b)$.
It follows from Corollary~\ref{corcor} that $M$ has at least $n_A(a,b)-3$ zeros in $(-b,-a)\cup(a,b)$, so that
$$
n_{JA}(-b,-a)+n_{JA}(a,b)\geq n_A(a,b)-3
$$
by Proposition~\ref{polezeroprop}~(i). In order to show  \eqref{estimate}
suppose that
$$
n_{JA}(-b,-a)+n_{JA}(a,b)> n_A(a,b)+3.
$$
In this case Proposition~\ref{polezeroprop}~(i) yields that there are more than $n_A(a,b)+3$ zeros of $M$ in $(-b,-a)\cup(a,b)$ and hence there are
more than $n_A(a,b)+1$ poles of $M$ in $(-b,-a)\cup(a,b)$ by Corollary~\ref{corcor}. On the other hand, by
the above reasoning the number of poles of $M$ in $(-b,-a)\cup(a,b)$ differs at most by one from $n_A(a,b)$, a contradiction and \eqref{estimate} is shown.

From \eqref{estimate} it follows that for $a$ (or $b$)
in the essential spectrum of $A$ the quantity  $n_{A}(a,b)$ is finite
if and only if the quantity $n_{JA}(-b,-a)+n_{JA}(a,b)$ is finite.
\end{proof}

Let us now consider the case where the coefficients $p,q$ and $r$ satisfy some symmetry properties with respect to $c$. For simplicity we assume $c=0$ and for the following we suppose:
\begin{itemize}
\item [(III)] The functions $p$ and $q$ are even and $r$ is odd, i.e.
$$
p(x)=p(-x), \quad q(x)=q(-x) \quad \mbox{and}
\quad r(x)=-r(-x) \quad \mbox{for a.e. }x\in\dR.
$$
\end{itemize}
\vskip 0.3cm
Obviously, (III) implies for the operators $B_+$
and $B_-$ from \eqref{Geraberg}
\begin{equation*}\label{Geraberg2}
\sigma(B_+) =  \sigma(B_-) \quad \mbox{and} \quad
 \sigma_\ess(B_+) = \sigma_\ess(B_-).
\end{equation*}
Together with Proposition \ref{indefprop2} we conclude
$$
\sigma_\ess(JA) = \sigma_\ess(A) \cup  \sigma_\ess(-A).
$$
Furthermore, if $h_\lambda\in\sD_+$ and $k_\lambda\in\sD_-$ are related via $h_\lambda(x)=k_\lambda(-x)$, $x\in\dR^+$, then we have
\begin{equation*}
\ell_+ h_\lambda=\lambda h_\lambda,\qquad\text{if and only if}\qquad \ell_-k_\lambda=\lambda k_\lambda.
\end{equation*}
Together with $(p_+h_\lambda^\prime)(0)=-(p_-k_\lambda^\prime)(0)$ this implies $m_+(\lambda)=-m_-(\lambda)$
and it follows that the function $M$ in \eqref{mfct} is given by
\begin{equation}\label{msym}
M(\lambda)=m_+(\lambda)+m_+(-\lambda).
\end{equation}
Observe that by Proposition~\ref{polezeroprop}~(i) the eigenvalues of $JA$ are symmetric with respect to
zero. In particular $n_{JA}(a,b)=n_{JA}(-b,-a)$ in Theorem~\ref{mainthm1}. This implies the following statement
which is a slight improvement of the estimate \eqref{estimate} in Theorem~\ref{mainthm1} if condition (III) holds
and $n_A(a,b)$ is even.

\begin{corollary}\label{cor234}
Let the assumptions be as in Theorem~\ref{mainthm1} and assume, in addition, that condition (III) is satisfied.
If $n_A(a,b)$ is even, then the estimates
\begin{equation*}
\left|\tfrac{1}{2}n_A(a,b)-n_{JA}(a,b)\right|=\left|\tfrac{1}{2}n_A(a,b)-n_{JA}(-b,-a)\right|\leq 1
\end{equation*}
are valid.
\end{corollary}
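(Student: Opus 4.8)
The plan is to reduce the statement to the general estimate \eqref{estimate} of Theorem~\ref{mainthm1} together with the symmetry of the spectrum of $JA$ that condition (III) forces, and then to close the gap between the bound there and the bound claimed here by an elementary parity argument.

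First I would record that under condition (III) the function $M$ is even. Indeed, from the representation \eqref{msym} one has $M(-\lambda)=m_+(-\lambda)+m_+(\lambda)=M(\lambda)$, so the zeros of $M$ are symmetric with respect to $0$. By Proposition~\ref{polezeroprop}~(i) the point spectrum of $JA$ inherits this symmetry, and consequently $n_{JA}(a,b)=n_{JA}(-b,-a)$ for every interval $(a,b)$ with $(a,b)\cap\sigma_\ess(A)=\emptyset$; this in particular yields the first equality asserted in the corollary.

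Next I would insert this identity into \eqref{estimate}. Since $n_{JA}(-b,-a)+n_{JA}(a,b)=2\,n_{JA}(a,b)$, the main estimate of Theorem~\ref{mainthm1} becomes
\begin{equation*}
\bigl|n_A(a,b)-2\,n_{JA}(a,b)\bigr|\leq 3,
\end{equation*}
and dividing by $2$ gives $\bigl|\tfrac{1}{2}n_A(a,b)-n_{JA}(a,b)\bigr|\leq\tfrac{3}{2}$.

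Finally I would invoke the evenness of $n_A(a,b)$. When $n_A(a,b)$ is even the number $\tfrac{1}{2}n_A(a,b)$ is an integer, and $n_{JA}(a,b)$ is an integer as well, so the quantity $\tfrac{1}{2}n_A(a,b)-n_{JA}(a,b)$ is an integer of absolute value at most $\tfrac{3}{2}$; the only such integers are $-1,0,1$, which gives the asserted bound $\leq 1$. I do not expect a genuine obstacle here: the entire content is the sharpening of the bound from $\tfrac{3}{2}$ to $1$, and this is precisely the point at which the evenness hypothesis (forcing $\tfrac{1}{2}n_A(a,b)\in\dZ$) is used, while the symmetry of $\sigma_p(JA)$ supplies the factor $2$ that replaces the sum of the two eigenvalue counts.
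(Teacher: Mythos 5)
Your argument is correct and is exactly the paper's: the remark preceding Corollary~\ref{cor234} deduces $n_{JA}(a,b)=n_{JA}(-b,-a)$ from the evenness of $M$ in \eqref{msym} via Proposition~\ref{polezeroprop}~(i), substitutes this into \eqref{estimate} to get $\bigl|n_A(a,b)-2\,n_{JA}(a,b)\bigr|\leq 3$, and uses the integrality of $\tfrac{1}{2}n_A(a,b)$ to sharpen $\tfrac{3}{2}$ to $1$. No gaps; the parity step is precisely where the evenness hypothesis enters, as you note.
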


The estimates in Theorem~\ref{mainthm1} and Corollary~\ref{cor234} will be further improved
in Theorem~\ref{mainthm2} below for the case that
condition (III) holds and instead of a gap in the essential spectrum we
consider the special
situation of an interval $(\alpha,\beta)$ with
 $0\leq \alpha < \min\sigma(A)< \beta \leq\min\sigma_\ess(A)$. It is worth mentioning that the following result for the comparison of the quantities
$n_{JA}(\alpha,\beta)$ and $n_{A}(\alpha,\beta)$ is optimal.


\begin{theorem}\label{mainthm2}
Assume that conditions (I), (II), and (III) hold for the Sturm-Liouville operator $A$,
that $\min\sigma(A)<\min\sigma_\ess(A)$
and let
$JA$ be the corresponding indefinite Sturm-Liouville differential
operator.
For $0\leq \alpha<\min\sigma(A)<\beta\leq\min\sigma_\ess(A)$ the following holds:
\begin{equation}\label{DaSchaust}
n_{JA}(\alpha,\beta)=n_{JA}(-\beta,-\alpha)=
\begin{cases}
\frac{1}{2}n_A(\alpha,\beta) & \text{if}\,\,n_A(\alpha,\beta)\,\,\text{is even},\\
\frac{1}{2}(n_A(\alpha,\beta)\pm 1) & \text{if}\,\,n_A(\alpha,\beta)\,\,\text{is odd},
\end{cases}
\end{equation}
where one of the quantities $n_{A}(\alpha,\beta)$, $n_{JA}(\alpha,\beta)$, $n_{JA}(-\beta,-\alpha)$ is infinite if and only if all the quantities $n_{A}(\alpha,\beta)$, $n_{JA}(\alpha,\beta)$, $n_{JA}(-\beta,-\alpha)$ are infinite.

In particular, the eigenvalues of $A$ below $\min \sigma_\ess(A)$
accumulate to $\min \sigma_\ess(A)$  if and only if the eigenvalues of $JA$ in
the interval
$(-\min \sigma_\ess(A), \min \sigma_\ess(A))$ accumulate to $-\min \sigma_\ess(A)$
and to $\min \sigma_\ess(A)$.
\end{theorem}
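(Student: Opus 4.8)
The plan is to reduce the claimed identity \eqref{DaSchaust} to the single inequality $\bigl|2\,n_{JA}(\alpha,\beta)-n_A(\alpha,\beta)\bigr|\le 1$. Indeed, $2\,n_{JA}(\alpha,\beta)-n_A(\alpha,\beta)$ has the same parity as $n_A(\alpha,\beta)$, so such an inequality forces the difference to equal $0$ when $n_A(\alpha,\beta)$ is even and $\pm1$ when it is odd, which is exactly \eqref{DaSchaust}. The equality $n_{JA}(\alpha,\beta)=n_{JA}(-\beta,-\alpha)$ is immediate from Proposition~\ref{polezeroprop}~(i) together with \eqref{msym}: the latter shows that $M$ is even, hence its zeros, i.e.\ the eigenvalues of $JA$, are symmetric about $0$.

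First I would translate everything into the functions $m_+$ and $M$. By condition (III) we have $m_-=-m_+$, so Proposition~\ref{polezeroprop2}~(i) identifies the eigenvalues of $A$ in $(\alpha,\beta)$ as the zeros of $m_+$ together with the poles of $m_+$; since $m_+$ is a Nevanlinna function these strictly interlace, and $n_A(\alpha,\beta)=Z+P$, where $P$ is the number of poles $\mu_1<\dots<\mu_P$ of $m_+$ (the eigenvalues of $B_+$) in $(\alpha,\beta)$ and $Z$ the number of zeros. On the other hand Proposition~\ref{polezeroprop}~(i) gives $n_{JA}(\alpha,\beta)=Z_M$, the number of zeros of $M$ in $(\alpha,\beta)$; because $\alpha\ge 0$, the term $m_+(-\,\cdot\,)$ in \eqref{msym} is holomorphic on $(\alpha,\beta)$, so there the poles of $M$ coincide with $\mu_1,\dots,\mu_P$. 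By the monotonicity in Proposition~\ref{mnevanprop} and Corollary~\ref{corcor}, $M$ has exactly one zero in each of the $P-1$ intervals $(\mu_i,\mu_{i+1})$, and the same holds for $m_+$; thus the interior contributions to $Z$ and $Z_M$ agree, and the comparison reduces to the two boundary intervals $(\alpha,\mu_1)$ and $(\mu_P,\beta)$. The degenerate case $P=0$, which forces $\lambda_1:=\min\sigma(A)$ to be a zero of $m_+$ and hence $n_A(\alpha,\beta)=1$, I would treat directly.

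For the lower boundary I would distinguish, as in Proposition~\ref{polezeroprop2}~(i), whether $\lambda_1$ is a zero of $m_+$ or a common pole of $m_+$ and $m_-$ (so that $\mu_1=\lambda_1$). Using Proposition~\ref{gapcor} (no eigenvalue of $JA$, hence no zero of $M$, in $[-\lambda_1,\lambda_1]$) together with the sign information $M(\lambda_1)<0$ obtained in the proof of Proposition~\ref{gapcor}, one checks that $m_+$ and $M$ have the \emph{same} number $\varepsilon_{\mathrm b}\in\{0,1\}$ of zeros in $(\alpha,\mu_1)$. The main obstacle is the upper boundary $(\mu_P,\beta)$, where both $m_+$ and $M$ increase from $-\infty$ at $\mu_P^{+}$, so each has a zero there precisely when its limit at $\beta^{-}$ is positive. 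Writing $M(\beta^{-})=m_+(\beta^{-})+m_+(-\beta)$ and determining the sign of $m_+(-\beta)$ — negative when $\lambda_1$ is a zero of $m_+$, positive when it is a pole — I would rule out the two ``unbalanced'' configurations in which exactly one of $m_+$ and $M$ acquires a boundary zero; these are precisely the configurations that would otherwise break the bound. This sign comparison at $\beta$ is the crux of the whole argument.

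Assembling the boundary counts then yields $2Z_M-(Z+P)\in\{-1,0,1\}$, which is the desired inequality, and the parity remark gives \eqref{DaSchaust}. For the infinite case and the concluding accumulation statement I would note that $n_A(\alpha,\beta)=\infty$ is possible only for $\beta\in\sigma_\ess(A)$ and then forces infinitely many poles of $m_+$ accumulating at $\beta$, hence infinitely many poles and, by Corollary~\ref{corcor}, infinitely many zeros of $M$, so $n_{JA}(\alpha,\beta)=\infty$; the converse is analogous. Taking $\beta=\min\sigma_\ess(A)$ and invoking the symmetry of $\sigma_p(JA)$ about $0$ shows that the eigenvalues of $A$ accumulate at $\min\sigma_\ess(A)$ if and only if those of $JA$ accumulate at both $-\min\sigma_\ess(A)$ and $\min\sigma_\ess(A)$.
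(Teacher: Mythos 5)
Your proposal is correct and takes essentially the same route as the paper's proof: both identify $\sigma_p(A)$ with the zeros and poles of $m_+$ and $\sigma_p(JA)$ with the zeros of the even function $M(\lambda)=m_+(\lambda)+m_+(-\lambda)$, count one zero of $M$ between consecutive poles via Corollary~\ref{corcor} and Proposition~\ref{mnevanprop}, and settle the boundary intervals through the sign of $m_+$ on $(-\infty,\lambda_1)$ according to whether $\lambda_1=\min\sigma(A)$ is a zero or a pole of $m_+$. Your parity reduction to $\bigl|2\,n_{JA}(\alpha,\beta)-n_A(\alpha,\beta)\bigr|\le 1$ together with the exclusion of the two unbalanced boundary configurations is just a repackaging of the paper's explicit four-case analysis (parity of $n_A(\alpha,\beta)$ combined with the zero/pole alternative at $\lambda_1$), and your sign determinations ($m_+(-\beta)<0$ in the zero case, $m_+(-\beta)>0$ in the pole case) are exactly the sign statements the paper uses in cases (i)--(iv).
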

Observe that the case  $n_{A}(\alpha,\beta)=\infty $
(and, hence, $n_{JA}(\alpha,\beta)=n_{JA}(-\beta,-\alpha)=\infty$)
 can only occur if the endpoint $\beta$
belongs to the essential spectrum of $A$.

\begin{proof}
Let $\lambda_1=\min\sigma(A)$ be the smallest eigenvalue of $A$.
By Proposition~\ref{polezeroprop2}~(i) and \eqref{msym} the
 isolated eigenvalues of the Sturm-Liouville operator $A$ coincide with
the poles and zeros of the function $m_+$.
Hence $\lambda_1$ is either a pole or a zero of $m_+$. Since $m_+$
is a Nevanlinna function the poles and zeros
of $m_+$ in $(\alpha,\beta)$ alternate. Therefore
one of the following four cases occurs if $n:=n_A(\alpha,\beta)<\infty$:
\begin{itemize}
 \item [{\rm (i)}] $n$ is even and $\lambda_1$ is a pole of $m_+$;
  \item [{\rm (ii)}] $n$ is even and $\lambda_1$ is a zero of $m_+$;
 \item [{\rm (iii)}] $n$ is odd and $\lambda_1$ is a pole of $m_+$;
 \item [{\rm (iv)}] $n$ is odd and $\lambda_1$ is a zero of $m_+$.
\end{itemize}
In case (i) the function $m_+$ has $\tfrac{n}{2}$ poles and $\tfrac{n}{2}$ zeros in $(\alpha,\beta)$. Moreover,
the largest eigenvalue $\lambda_n$ of $A$ in $(\alpha,\beta)$ is a zero of $m_+$ and hence $m_+$ is positive on $(-\infty,\lambda_1)\cup(\lambda_n,\beta)$. The function $M$ has $\tfrac{n}{2}$ poles in $[\lambda_1,\lambda_{n-1}]$
and it follows from Corollary~\ref{corcor} that there are $\tfrac{n}{2}-1$ zeros of $M$ in $(\lambda_1,\lambda_{n-1})$. Since $m_+$ is positive on $(-\infty,\lambda_1)$ and $m_+(\lambda_n)=0$
it follows that $M$ has also one zero in $(\lambda_{n-1},\lambda_n)$, and is positive on $(\alpha,\lambda_1)$
and $[\lambda_n,\beta)$. Now Proposition~\ref{polezeroprop}~(i) implies
$n_{JA}(\alpha,\beta)=\tfrac{n}{2}=\tfrac{1}{2}n_A(\alpha,\beta)$
and by symmetry also $n_{JA}(-\beta,-\alpha)=\tfrac{n}{2}=\tfrac{1}{2}n_A(\alpha,\beta)$.
The simple modifications of this argument for case (ii) are left to the reader.

In case (iii) the function $m_+$ has $\tfrac{1}{2}(n+1)$ poles and $\tfrac{1}{2}(n-1)$ zeros in $(\alpha,\beta)$. Moreover,
$m_+$ is positive on $(-\infty,\lambda_1)$ and since the largest eigenvalue $\lambda_n$ of $A$ in $(\alpha,\beta)$ is a pole $m_+$ is negative on $(\lambda_n,\beta)$. The function $M$ has $\tfrac{1}{2}(n+1)$ poles in $[\lambda_1,\lambda_n]$
and it follows from Corollary~\ref{corcor} that there are $\tfrac{1}{2}(n-1)$ zeros of $M$ in $(\lambda_1,\lambda_n)$. Furthermore, since $m_+$ is positive on $(-\infty,\lambda_1)$ and negative on $(\lambda_n,\beta)$ there may be one more zero of $M$ in $(\lambda_n,\beta)$. Now Proposition~\ref{polezeroprop}~(i) implies $n_{JA}(\alpha,\beta)=\tfrac{1}{2}(n\pm 1)=\tfrac{1}{2}(n_A(\alpha,\beta)\pm 1)$ and by symmetry also $n_{JA}(-\beta,-\alpha)=\tfrac{1}{2}(n\pm 1)=\tfrac{1}{2}(n_A(\alpha,\beta)\pm 1)$.
The simple modifications of this argument for case (iv) are left to the reader.
Relation \eqref{DaSchaust} is proved.

From \eqref{DaSchaust} it follows also that for $\beta$
in the essential spectrum of $A$ the quantity  $n_{A}(\alpha,\beta)$ is finite
if and only if the quantities
$n_{JA}(\alpha,\beta)$ and $n_{JA}(-\beta,-\alpha)$ are finite.
\end{proof}

The next proposition on the interlacing properties of the eigenvalues of $JA$ with respect
to the eigenvalues of $A$ can be shown with the same methods as Theorem~\ref{mainthm2}.
If $(a,b)$ is a gap in $\sigma_\ess(A)$ we denote by $(\lambda_k)$
the eigenvalues of $A$ in increasing order, where $k=1,\dots,n_A(a,b)$ if $n_A(a,b)$ is finite,
$k\in\dN$ ($k\in-\dN$) if the eigenvalues accumulate to $b$ ($a$, respectively), and
$k\in\dZ$ if both endpoints $a$ and $b$ are accumulation
points of eigenvalues of $A$.

\begin{proposition}
Assume that conditions (I), (II), and (III) hold for the Sturm-Liouville operator $A$
and let $JA$ be the corresponding indefinite Sturm-Liouville differential
operator. Let $(a,b)\cap\sigma_\ess(A)=\emptyset$ and denote by $(\lambda_k)$ the eigenvalues of $A$
in $(a,b)$ in increasing order. Then exactly one of the following statements hold:
\begin{itemize}
\item [{\rm (i)}] Each interval $(\lambda_{2k-1},\lambda_{2k})$ contains exactly one
eigenvalue of $JA$ and each interval $[\lambda_{2k},\lambda_{2k+1}]$ belongs to $\rho(JA)$;
\item [{\rm (ii)}] Each interval $(\lambda_{2k},\lambda_{2k+1})$ contains exactly one
eigenvalue of $JA$ and each interval $[\lambda_{2k-1},\lambda_{2k}]$ belongs to $\rho(JA)$.
\end{itemize}
Furthermore, in the case $a<\lambda_1=\min\sigma(A)<b\leq\min\sigma_\ess(A)$ statement (i) holds,
that is, for the positive eigenvalues $\lambda_k(JA)$ of $JA$ ordered in an increasing way we have
\begin{equation*}
\lambda_k (JA) \in \left(\lambda_{2k-1}, \lambda_{2k}\right),\quad k=1,2,\dots
\end{equation*}
\end{proposition}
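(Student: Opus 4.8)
The plan is to read off the positions of the eigenvalues of $JA$ from the zeros of $M$ and to exploit that, under condition~(III), $M$ is built from the single Nevanlinna function $m_+$ via $M(\lambda)=m_+(\lambda)+m_+(-\lambda)$ as in \eqref{msym}; the guiding principle is exactly the one used in the proof of Theorem~\ref{mainthm2}. By Proposition~\ref{polezeroprop2}~(i) together with \eqref{msym}, the eigenvalues $(\lambda_k)$ of $A$ in $(a,b)$ are precisely the poles and zeros of $m_+$ lying in $(a,b)$, and since $m_+$ is a Nevanlinna function these poles and zeros strictly alternate; by Proposition~\ref{polezeroprop}~(i) the eigenvalues of $JA$ in $(a,b)\subset\dR^+$ are precisely the zeros of $M$ there. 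Since $\min\sigma(A)>0$ all $\lambda_k$ are positive, so the relevant eigenvalues of $JA$ are the positive ones.

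First I would record three elementary facts. (a) By Proposition~\ref{mnevanprop}, $M$ is monotonically increasing on every subinterval of $\dR^+$ contained in its domain of holomorphy. (b) Because $m_+$ is an increasing Nevanlinna function, at each real pole $P$ one has $m_+(\lambda)\to+\infty$ as $\lambda\uparrow P$ and $m_+(\lambda)\to-\infty$ as $\lambda\downarrow P$; as the poles of $m_+$ are the eigenvalues of $B_+$, all positive by Lemma~\ref{lemma1}~(i) and condition~(II), the summand $\lambda\mapsto m_+(-\lambda)$ is holomorphic for $\lambda>0$, so the poles of $M$ in $(a,b)$ coincide with the poles of $m_+$ and $M$ inherits the same $\pm\infty$ behaviour there. (c) On $(-\infty,\min\sigma(A))$, and hence on $(-\infty,0)$, the function $m_+$ is holomorphic, increasing and zero-free, so it has a constant nonzero sign $\varepsilon\in\{+,-\}$; this sign is $+$ if $\min\sigma(A)$ is a pole of $m_+$ and $-$ if $\min\sigma(A)$ is a zero of $m_+$. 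Consequently $M(\lambda_k)=m_+(-\lambda_k)$ has sign $\varepsilon$ whenever $\lambda_k$ is a zero of $m_+$, while $M$ has a pole at every $\lambda_k$ that is a pole of $m_+$; in particular no eigenvalue of $A$ is a zero of $M$, so each $\lambda_k$ itself belongs to $\rho(JA)$.

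The core is a window-by-window analysis. On each interval $(\lambda_j,\lambda_{j+1})$ between consecutive eigenvalues of $A$ the function $M$ is holomorphic and, by~(a), increasing; using~(b) and~(c) I would compute the boundary behaviour: at an endpoint that is a pole of $m_+$ the value tends to $-\infty$ (approached from the right) or $+\infty$ (approached from the left), and at an endpoint that is a zero of $m_+$ the value equals $m_+(-\lambda)$, of sign $\varepsilon$. Monotonicity then forces exactly one zero of $M$ in a window precisely when $M$ changes sign across it, and none otherwise, and Corollary~\ref{corcor} cross-checks the counts (each pole--pole window carries exactly one zero). Running the four configurations --- $\lambda_1$ a pole or a zero of $m_+$, combined with $\varepsilon=+$ or $\varepsilon=-$ --- shows: when the type of $\lambda_1$ matches $\varepsilon$ (pole with $+$, or zero with $-$) the zeros of $M$ occupy exactly the windows $(\lambda_{2k-1},\lambda_{2k})$ while each closed window $[\lambda_{2k},\lambda_{2k+1}]$ stays in $\rho(JA)$, which is assertion~(i); in the two mismatched configurations the roles of the indices swap and one obtains assertion~(ii). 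Since the parity of the window carrying a zero is pinned down by the pair (type of $\lambda_1$, sign $\varepsilon$), exactly one of (i), (ii) holds.

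For the final assertion, suppose $a<\lambda_1=\min\sigma(A)<b\le\min\sigma_\ess(A)$. Here the type of $\lambda_1$ and the sign $\varepsilon$ are linked by~(c): a pole $\lambda_1$ forces $\varepsilon=+$ and a zero $\lambda_1$ forces $\varepsilon=-$, so both matched configurations arise and the analysis lands in assertion~(i). Finally, by Proposition~\ref{gapcor} the interval $(-\min\sigma(A),\min\sigma(A))$ lies in $\rho(JA)$, so there is no positive eigenvalue of $JA$ below $\lambda_1$; hence the smallest positive eigenvalue of $JA$ sits in the first window $(\lambda_1,\lambda_2)$ and, ordering the positive eigenvalues increasingly, $\lambda_k(JA)\in(\lambda_{2k-1},\lambda_{2k})$ for all $k$. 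I expect the main obstacle to be the bookkeeping: keeping the alternation of poles and zeros of $m_+$ consistent with the even/odd indexing of the $\lambda_k$ while simultaneously tracking the global sign $\varepsilon$, so that the genuinely different local patterns collapse into the single clean dichotomy (i)/(ii).
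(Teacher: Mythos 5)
Your proposal is correct and follows essentially the route the paper intends: the paper gives no separate proof but states the proposition ``can be shown with the same methods as Theorem~4.5'' (the proof of \eqref{DaSchaust}), and your argument is exactly that method --- identifying the $\lambda_k$ with the alternating poles and zeros of the Nevanlinna function $m_+$ via Proposition~3.3\,(i) and \eqref{msym}, the eigenvalues of $JA$ with the zeros of $M$ via Proposition~3.2\,(i), and then using the monotonicity of $M$ on $\dR^+$ from Proposition~3.5 together with the sign of $m_+$ on the negative half-line and Proposition~3.4 for the final claim. Your window-by-window bookkeeping with the sign $\varepsilon$ and the four configurations is a clean organization of the same case analysis (pole/zero type of $\lambda_1$, parity) carried out in the proof of Theorem~4.5, so no genuinely different ideas are involved and no gaps remain.
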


\section{Examples}

In this section some applications and examples illustrating
the results in the previous section are presented. We start with
a variant of Kneser's classical oscillation result in the context
of indefinite Sturm-Liouville operators. As a second application a periodic
problem is treated and in a third explicit example the number of eigenvalues
of the indefinite operator is computed for a particularly simple potential.

\subsection{Kneser's result for left-definite Sturm-Liouville operators}\label{Kneserli}
In this first example accumulation of the eigenvalues of $JA$ to the essential spectrum
is studied with the help of Kneser's classical result from \cite{K11},
see also \cite[Corollary XIII.7.57]{DS63} and \cite{GST96,GUe,KT,KT9,W87} for possible generalizations.
Here, for simplicity, let $r(x)=\sgn(x)$, $p(x)=1$, and assume that $q>0$ admits the
positive limits
\begin{equation*}
0<q_\infty:=\lim_{x\rightarrow +\infty} q(x)=\lim_{x\rightarrow -\infty} q(x).
\end{equation*}
Clearly, condition (I) holds with $c=0$ and by well-known results (see, e.g. \cite[Theorem 6.3]{W87})
the corresponding maximal Sturm-Liouville operator $Af=-f^{\prime\prime}+qf$, $f\in\sD$,
satisfies condition (II). Here we have $\sigma_\ess(B_\pm)=[q_{\infty},\infty)$ and therefore
\begin{equation*}
\sigma_\ess(A)=[q_{\infty},\infty).
\end{equation*}
By Propositions~\ref{indefprop1} and \ref{indefprop2} the essential spectrum of
the $J$-selfadjoint indefinite Sturm-Liouville operator $JA f= \sgn(-f^{\prime\prime}+qf)$, $f\in\sD$,
is then given by
\begin{equation*}
\sigma_\ess(JA)=(-\infty,-q_{\infty}]\cup[q_\infty,\infty).
\end{equation*}

Let us now make use of Kneser's criterion: If
\begin{equation}\label{Kneser}
\limsup_{x\rightarrow \infty}\, x^2 (q(x)-q_\infty)<-\frac{1}{4} \quad \text{or}\quad
\limsup_{x\rightarrow -\infty}\, x^2 (q(x)-q_{\infty})<-\frac{1}{4}
\end{equation}
holds, then there are infinitely many eigenvalues of $B_+$ or $B_-$, respectively, below their essential spectrum
and hence also the eigenvalues of $A$ accumulate to $\min\sigma_\ess(A)$. By Theorem~\ref{mainthm1}
there are infinitely many eigenvalues of $JA$ in the corresponding gap $(-q_{\infty},q_\infty)$ in $\sigma_\ess(JA)$.
In the present situation it follows also that the eigenvalues of $JA$ in $(-q_{\infty},q_\infty)$
accumulate to $q_\infty$ ($-q_{\infty}$) if the first (second, respectively)
condition in \eqref{Kneser} holds.

Similarly, if instead of \eqref{Kneser} we have
\begin{equation*}
\liminf_{x\rightarrow \infty}\, x^2 (q(x)-q_\infty)>-\frac{1}{4} \quad \text{and}\quad
\liminf_{x\rightarrow -\infty}\, x^2 (q(x)-q_{\infty})>-\frac{1}{4},
\end{equation*}
then there are only finitely many eigenvalues of $B_+$ and $B_-$ below their essential spectrum
and hence there are also only finitely many eigenvalues of $A$ below $\min\sigma_\ess(A)$.
In this situation Theorem~\ref{mainthm1} implies that $JA$ has only finitely many
eigenvalues in the corresponding gap around zero and their total number in $(-q_{\infty},q_\infty)$ differs at most by three
of the number of eigenvalues of $A$ below $q_{\infty}=\min\sigma_\ess(A)$.

\subsection{Periodic operators} Suppose that the coefficients $\vert r\vert$, $p$ and $q$ of the
definite Sturm-Liouville expression $\ell$ are $\gamma$-periodic for some $\gamma>0$ and
assume that $\text{essinf}\, q/\vert r\vert$ is positive as well as $r$ satisfies condition (I). Then condition (II) is satisfied
for the corresponding maximal operator $A$ in $L^2_{\vert r\vert}(\dR)$. Furthermore, let $\lambda_1<\lambda_2 \leq\lambda_3 \leq \dots$  be
the eigenvalues of the selfadjoint operator associated with $\ell$ restricted to functions in $L^2_{\vert r\vert}(0,\gamma)$
with the boundary conditions
\begin{equation*}
\begin{pmatrix} f(0)\\ (pf^\prime)(0) \end{pmatrix}=\begin{pmatrix} f(\gamma)\\ (pf^\prime)(\gamma) \end{pmatrix}
\end{equation*}
and let $\mu_1\leq \mu_2\leq\mu_3\leq \cdots$ be
the eigenvalues of the selfadjoint operator associated with $\ell$ restricted to functions in $L^2_{\vert r\vert}(0,\gamma)$
with the boundary conditions
\begin{equation*}
\begin{pmatrix} f(0)\\ (pf^\prime)(0) \end{pmatrix}=-\begin{pmatrix} f(\gamma)\\ (pf^\prime)(\gamma)
\end{pmatrix}.
\end{equation*}
Then $0<\lambda_1<\mu_1\leq\mu_2 <\lambda_2\leq\lambda_3<\mu_3\dots$ and it is well-known that
\begin{equation*}
 \sigma(A)=\sigma_\ess(A)=[\lambda_1,\mu_1]\cup[\mu_2,\lambda_2]\cup[\lambda_3,\mu_3]\dots
\end{equation*}
holds, see, e.g., \cite[$\S\,$12]{W87}. Here it follows that also $\sigma_\ess(B_+)=\sigma_\ess(B_-)=\sigma_\ess(A)$ holds, and therefore
by Proposition~\ref{indefprop2}
the essential spectrum $\sigma_\ess(JA)$ of $JA$ has a band structure, is symmetric with respect to $0$
and is given by
\begin{equation*}
\dots [-\mu_3,-\lambda_3]\cup[-\lambda_2,-\mu_2]\cup [-\mu_1,-\lambda_1]\cup [\lambda_1,\mu_1]\cup[\mu_2,\lambda_2]\cup[\lambda_3,\mu_3]\dots.
\end{equation*}
Since $A$ has no eigenvalues in
the (possible) gaps $(\mu_1,\mu_2)$, $(\lambda_2,\lambda_3)$, $(\mu_3,\mu_4),\dots$,
of $\sigma_\ess(A)$ we conclude from
Theorem~\ref{mainthm1} that each of the sets
\begin{equation*}
(-\mu_2,-\mu_1)\cup(\mu_1,\mu_2),\quad
(-\lambda_3,-\lambda_2)\cup(\lambda_2,\lambda_3),\quad (-\mu_4,-\mu_3)\cup(\mu_3,\mu_4),\dots
\end{equation*}
contains at most $3$ eigenvalues of the indefinite Sturm-Liouville operator $JA$.
Note that by Proposition~\ref{gapcor} we have
$(-\lambda_1,\lambda_1)\subset\rho(JA)$. Furthermore, if the coefficients $r$, $p$, and $q$ satisfy
the symmetry condition (III), then Corollary~\ref{cor234} implies that in each of the (possible) gaps
$$
\dots (-\mu_4,-\mu_3),\,\,(-\lambda_3,-\lambda_2),\,\,(-\mu_2,-\mu_1),\,\,(\mu_1,\mu_2),\,\,(\lambda_2,\lambda_3),\,\,
(\mu_3,\mu_4),\dots
$$
of
$\sigma_\ess(JA)$ there is at most one eigenvalue.

\subsection{A solvable problem with a hyperbolic cosine potential.}
As an explicit example consider the situation $r(x)=\sgn x$, $p(x)=1$ and
$$
q(x) =
(\kappa +1)^2-\frac{\kappa(\kappa +1)}{{\cosh}^2(x)}\qquad\text{for some}\quad\kappa\in\dN.
$$
Obviously, conditions (I) and (III) are satisfied. Moreover, $q(x)\geq\kappa+1$ and
$\lim_{|x|\to\infty} q(x) =(\kappa +1)^2$ imply that for the
corresponding maximal operator $A$ we have $\min \sigma(A) \geq \kappa+1$ and $\sigma_\ess (A) =[(\kappa +1)^2,\infty)$. In particular, condition (II) is also fulfilled.
It is known (see, e.g.,\ \cite{H92}), that the operator $A$
has precisely $\kappa$ eigenvalues in the interval $(\kappa +1, (\kappa +1)^2)$.
Therefore, the essential spectrum of the corresponding indefinite Sturm-Liouville operator $JA$
is given by $$\sigma_\ess(JA)=(-\infty,-(\kappa+1)^2]\cup[(\kappa+1)^2,\infty)$$ and
by Theorem \ref{mainthm2} the operator $JA$
has $\tfrac{\kappa}{2}$ eigenvalues in the interval $(\kappa +1, (\kappa +1)^2)$ if $\kappa$ is even and $\tfrac{\kappa\pm 1}{2}$ eigenvalues if $\kappa$ is odd.
The same holds for the interval $(-(\kappa +1)^2, -(\kappa +1))$;
cf.\ Theorem \ref{mainthm2}. Note that by Proposition~\ref{gapcor} $\kappa+1$ and $-(\kappa+1)$ are no
eigenvalues of $JA$.

\subsection*{Acknowledgement}
The authors thank Gerald Teschl for fruitful remarks.


\begin{thebibliography}{99}


\bibitem {AI89} T.Ya.\ Azizov and I.S.\ Iokhvidov, Linear operators in spaces with an indefinite metric,
John Wiley Sons, Chichester, 1989.

\bibitem{B07}
J.\ Behrndt, On the spectral theory of singular indefinite Sturm-Liouville operators,
J.\ Math.\ Anal.\ Appl.\ 334 (2007), 1439--1449.

\bibitem{BKT09} J.\ Behrndt, Q.\ Katatbeh, and C.\ Trunk, Non-real eigenvalues of
singular indefinite Sturm-Liouville operators, Proc.\ Amer.\ Math.\ Soc.\ 137 (2009), 3797--3806.

\bibitem{BP10} J.\ Behrndt and F.\ Philipp, Spectral analysis of singular ordinary differential operators
with indefinite weights, J.\ Differential Equations 248 (2010), 2015--2037.

\bibitem {BT07} J.\ Behrndt and C.\ Trunk, On the negative squares of indefinite Sturm-Liouville operators,
J.\ Differential Equations 238 (2007), 491--519.

\bibitem {BS} M.Sh.\ Birman and M.Z.\ Solomjak,
Spectral theory of selfadjoint operators in Hilbert space, Mathematics and its Applications,
Dordrecht, 1987.

\bibitem{B74} J.\ Bognar, Indefinite inner product spaces, Ergebnisse der Mathematik und ihrer Grenzgebiete, Band 78, Springer, New York-Heidelberg, 1974.

\bibitem {CL89} B.\ \'Curgus and H.\ Langer, A Krein space approach to
symmetric ordinary differential operators with an indefinite weight function,
J.\ Differential Equations 79 (1989), 31--61.

\bibitem {CN95} B.\ \'Curgus and B.\ Najman, The operator $\sgn(x)\tfrac{d^2}{dx^2}$ is similar
to a selfadjoint operator in $L^2(\dR)$, Proc.\ Amer.\ Math.\ Soc.\ 123 (1995), 1125--1128.

\bibitem {DL77}  K.\ Daho and H.\ Langer, Sturm-Liouville operators with an indefinite weight function,
Proc.\ Roy.\ Soc.\ Edinburgh Sect.\ A 78 (1977/78), 161--191.

\bibitem{DS63}
N.~Dunford and J.T.~Schwartz, Linear operators. Part II: Spectral theory.
Self adjoint operators in Hilbert space,
John Wiley \& Sons, New York, 1963.


\bibitem{GST96} F.\ Gesztesy, B.\ Simon, and G.\ Teschl, Zeros of the Wronskian and renormalized oscillation theory, Amer. J. Math.  118  (1996),  571--594.


\bibitem {GUe} F.\ Gesztesy and M.\ \"{U}nal, Perturbative oscillation criteria and Hardy-type inequalities,
Math.\ Nachr.\ 189 (1998), 121--144.


\bibitem{H92}
R.L.~Hall, Square-well representation for potentials in
quantum mechanics, J.\ Math.\ Phys. 33 (1992), 3472--3476.

\bibitem{KKM09} I.M.\ Karabash, A.S.\ Kostenko, and M.M.\ Malamud, The similarity problem for $J$-nonnegative
Sturm-Liouville operators, J.\ Differential Equations 246 (2009), 964--997.

\bibitem{KM07} I.M.\ Karabash and M.M.\ Malamud,
Indefinite Sturm-Liouville operators $({\rm sgn}\,x)(-\tfrac{d^2}{dx^2}+q(x))$ with finite-zone potentials,  Oper.\ Matrices  1  (2007),  301--368.

\bibitem{KT09} I.M.\ Karabash and C.\ Trunk,  Spectral properties of singular Sturm-Liouville operators,
Proc.\ Roy.\ Soc.\ Edinburgh Sect.\ A 139 (2009), 483--503.

\bibitem{KK}   I.S.\ Kac and M.G.\ Krein, $R$-functions -- analytic functions mapping the upper halfplane into itself,
Appendix I to the russian edition of F.V.\ Atkinson, Discrete and continuous boundary problems, Mir, Moscow, 1968.

\bibitem{K11} A.\ Kneser, Untersuchungen \"{u}ber die reellen Nullstellen der Integrale linearer Differentialgleichungen, Math.\ Ann.\ 42 (1893), 409--435.

\bibitem{KMWZ03} Q.~Kong, M.~M\"{o}ller, H.~Wu, and A.~Zettl, Indefinite Sturm-Liouville problems, Proc.\ Roy.\ Soc.\ Edinburgh Sect.\ A  133  (2003), 639--652.

\bibitem{KWZ04} Q.~Kong, H.~Wu, and A.~Zettl, Singular left-definite Sturm-Liouville problems,
J.\ Differential Equations 206 (2004), 1--29.

\bibitem{K10} A. Kostenko, Similarity problem for indefinite Sturm-Liouville operators with periodic coefficients, arXiv:1004.3991

\bibitem{KT} H.\ Kr\"{u}ger and G.\ Teschl,
Effective Pr\"ufer angles and relative oscillation criteria,
J.\ Differential Equations 245 (2008), 3823--3848.

\bibitem{KT9} H.\ Kr\"{u}ger and G.\ Teschl,
Relative oscillation theory, weighted zeros of the Wronskian, and the spectral shift function,  Comm. Math. Phys.  287  (2009), 613--640.

\bibitem {L82} H.\ Langer, Spectral functions of definitizable operators in Krein spaces, in:
Functional Analysis: Proceedings of a Conference Held at Dubrovnik, Yugoslavia, November 2-14, 1981,
Lecture Notes in Mathematics 948, Springer (1982), 1--46.


\bibitem {MZ05} M.\ Marletta and A.\ Zettl,
Floquet theory for left-definite Sturm-Liouville problems,
J.\ Math.\ Anal.\ Appl.\ 305 (2005), 477--482.


\bibitem{R85} R.\ Rosenberger,  A new characterization of the Friedrichs extension of semibounded Sturm-Liouville operators,
J.\ London Math.\ Soc.\ 31 (1985), 501--510.

\bibitem{T74} B.\ Textorius, Minimaxprinzipe zur Bestimmung der Eigenwerte $J$-nichtnegativer Operatoren, Math.\ Scand.\ 35 (1974), 105--114.

\bibitem {W87} J.\ Weidmann, Spectral theory of ordinary differential operators,
Lecture Notes in Mathematics 1258, Springer, 1987.



\bibitem {Z05} A.\ Zettl, Sturm-Liouville theory, AMS, Providence, RI, 2005.

\end{thebibliography}
\end{document}